\newtheorem{assumption}{Assumption}
\newtheorem{lemma}{Lemma}
\newtheorem{theorem}{Theorem}
\theoremstyle{remark}
\newtheorem{definition}{Definition}
\newcommand{\bxi}{{\boldsymbol{\xi}}}
\newcommand{\btxi}{{\boldsymbol{\tilde{\xi}}}}
\newcommand{\real}{{\mathbb R}}
\newcommand{\Ebb}{{\mathbb E}}
\title{Optimal Convergence for Stochastic Optimization with Multiple Expectation Constraints}
\author{Kinjal Basu \\
 LinkedIn Corporation\\
 Mountain View, CA 94083\\
 \texttt{kbasu@linkedin.com}\\
 \And Preetam Nandy \\
 LinkedIn Corporation\\
 Mountain View, CA 94083\\
 \texttt{pnandy@linkedin.com}\\
 }
\begin{document}
\maketitle

\begin{abstract}
In this paper, we focus on the problem of stochastic optimization where the objective function can be written as an expectation function over a closed convex set. We also consider multiple expectation constraints which restrict the domain of the problem. We extend the cooperative stochastic approximation algorithm from \cite{lan2016algorithms} to solve the particular problem. We close the gaps in the previous analysis and provide a novel proof technique to show that our algorithm attains the optimal $\Ocal(1/ \sqrt{N})$ rate of convergence for both optimality gap and constraint violation when the functions are generally convex. We also compare our algorithm empirically to the state-of-the-art and show improved convergence in many situations. 
\end{abstract}

\vspace{-0.2cm}
\section{Introduction}
\label{sec:introduction}
\vspace{-0.3cm}
In this paper we focus on a stochastic optimization problem with multiple expectation constraints. Specifically, we are interested in solving a problem of the following form:
\begin{equation}
\label{eq:main_problem}
\begin{aligned}
& \underset{x}{\text{min}} & & f(x) := \Ebb_{ \xi_0} \left(F(x, \xi_0)\right)\\
& \text{subject to}
& & g_j(x) :=  \Ebb_{ \xi_j} \left(G_j(x, \xi_j)\right) \leq 0\qquad \text{ for } j = 1, \ldots, m. \\
&&& x \in \Xcal
\end{aligned}
\end{equation}
where $\Xcal \subseteq \real^n$ is a convex compact set, $\xi_j$ are random variables 
for $j = 0, \ldots, m$. 
$F(\cdot), G_j(\cdot)$ for $j = 1, \ldots, m$ are closed convex functions with respect to $x$ for a.e. $\xi_j \in \Pcal_j$ for $j = 0, \ldots, m$. 

There are several applications of the above problem formulation \eqref{eq:main_problem} especially, in fields such as control theory \cite{primbs2009stochastic}, management science \cite{charnes1959chance}, finance \cite{rockafellar2000optimization}, etc.
Our specific motivation comes from a problem arising from a large-scale social network platform. Any such platform does extensive experimentation to identify which parameters should be applied to certain members to get the most amount of metric gains. An example of such an influential parameter is the gap between successive ads on a newsfeed product (described in details in Section \ref{sec:experiments}). 

Traditionally, stochastic optimization routines were solved either via sample average approximation (SAA) \cite{kleywegt2002sample, shapiro2003monte, wang2008sample} or via stochastic approximation (SA) \cite{robbins1951stochastic}.  In SAA, each function $g_j$ for $j = 1, \ldots, m$ are approximated by a sample average $\hat{g}_j(x) = \sum_{i=1}^n G_j(x, \xi_{j,i}) /n$ and then solved via traditional means. The SAA solution is computationally challenging, may not be applicable to the online setting and the approximation might lead to an infeasible problem. In SA, the algorithm follows the usual gradient descent algorithm by using the stochastic gradient rather than $f'(x)$ to solve \eqref{eq:main_problem} \cite{benveniste2012adaptive, spall2005introduction}. The SA solution requires a projection onto the domain specified by $\{g_j(x) \leq 0\}$, which may not be possible when we have the expectation formulation. 

There have been several papers showing improvement over the original SA method especially for strongly convex problems \cite{polyak1992acceleration}, and for general class of non-smooth convex stochastic programming problems \cite{nemirovski2009robust}. However, these methods may not be directly applicable to the case where each $g_j$ is an expectation constraint. Very recently \cite{yu2017online} developed a method of stochastic online optimization with general stochastic constraints by generalizing Zinkevich's online convex optimization. Their approach is much more general and as a result may be not the optimal approach to solving this specific problem. For more related works, we refer to \cite{lan2016algorithms} and \cite{yu2017online} and the references therein.

Lan et. al. (2016) \cite{lan2016algorithms} introduced the cooperative stochastic approximation (CSA) algorithm to solve problem \eqref{eq:main_problem} with $m=1$. In this paper, we extend their algorithm to multiple expectation constraints and also close several gaps in their proof of optimal convergence. Specifically, we prove that an optimal point $\hat{\xb}$ satisfying 
\begin{align*}
\Ebb(f(\hat{\xb}) - f(\xb^*)) \leq c/\sqrt{N} \text{ and }  \Ebb(g_j(\hat{\xb}))  \leq C/\sqrt{N}  \;\; \forall j \in \{1,\ldots, m\}
\end{align*}
can be obtained with $N$ steps of our algorithm. These rates are optimal due to the lower bound results following from \cite{agarwal2009information}. We primarily focus on the theoretical analysis of the algorithm in this paper. We use a completely novel proof technique and overcome several gaps in the proof of the \cite{lan2016algorithms} which we highlight in Section \ref{sec:convergence}. Furthermore, we run experiments on simulated data to show that this algorithm empirically outperforms the algorithm in \cite{yu2017online}. For more practical results, we refer the reader to \cite{tu2019prophet}.

The rest of the paper is organized as follows. In Section \ref{sec:algo}, We introduce the multiple cooperative stochastic approximation (MCSA) algorithm for solving the problem stated in \eqref{eq:main_problem}. In Section \ref{sec:convergence}, we prove the convergence guarantee of our algorithm. We discuss some empirical results in Section \ref{sec:experiments} before concluding with a discussion in Section \ref{sec:discussion}.

\vspace{-0.2cm}
\section{Multiple Cooperative Stochastic Approximation (MCSA)}
\label{sec:algo}
\vspace{-0.3cm}
\label{sec:mcsa}
%
We begin with the definition of a projection operator. Let $\psi : \Xcal \rightarrow \real$ be a 1-strongly convex proximal function, i.e., we have
$\psi(y) \geq \psi(x) + \langle \nabla \psi(x), y - x \rangle + \frac{1}{2} \norm{x-y}_{}^2.$
The Bregman divergence \cite{duchi2011adaptive} associated with a strongly convex and differentiable function $\psi$ is defined as
$B_{\psi}(x, y) := \psi(x) - \psi(y) - \langle \nabla \psi(y), x - y \rangle.$
Note that due to the strong convexity of $\psi$ we have $B_{\psi}(y, x) \geq \frac{1}{2} \norm{x-y}_{2}^2 \;\; \forall x,y  \in \Xcal. $ 

\begin{definition}\label{definition: projection}
Based on the function $\psi$ we define the proximal projection function $P_{x, \Xcal}^{\psi}: \real^n \rightarrow \real^n$ as
\vspace{-0.3cm}
\begin{align}
\label{eq:prox_proj}
P_{x, \Xcal}^{\psi}(\cdot) := \argmin_{z \in \Xcal} \left\{ \langle \cdot, z \rangle + B_{\psi}(z,x) \right\}.
\end{align}
We fix $\psi$ and in the rest of the paper, we denote the proximal projection function by $P_x(\cdot)$. 
\end{definition}

We assume that the objective function $f(x)$ and each constraint function $g_j(x)$ in \eqref{eq:main_problem} are well-defined, finite valued, continuous and convex functions for $x \in \Xcal$. Similar to the CSA algorithm \cite{lan2016algorithms}, at each iterate $t > 0$, we move along the subgradient direction $f'(x_t)$ if all $g_j(x_t) \leq \eta_{j,t}$ for all $j$ and for the tolerance sequence $\{\eta_j\}_t > 0$. Otherwise we move along a randomly chosen subgradient direction $g'_{j^*}(x_t)$ where $j^*$ is chosen randomly from the set $\left\{j : g_j(x_t) > \eta_{j,t} \right\}$. This modification to the CSA algorithm allows us to work with multiple constraints. At each stage, we move along the chosen direction with a stepsize $\gamma_t$. 
We will show in the Sections \ref{sec:main_result} how we can choose the tolerances $\{\eta_{j,t}\}, \{\gamma_t\}$ so that we can achieve the optimal convergence rates. 

Since we do not have access to the exact functions $f', g_j'$ and $g_j$ for $j = 1,\ldots, m$, we use an approximation $\hat{G}_j$ for $g_j$ and use the stochastic gradients $G_j'( x_t, \xi_{j,t})$ and $F'(x_t, \xi_{0,t})$ for $g_j'(x_t)$ and $f'(x_t)$ respectively, where $\xi_{j,t}$ are i.i.d. observations that we get to observe from the distribution of $\xi_j$ at iteration $t$. We run $N$ steps of our algorithm and choose our final solution $\hat{\xb}$ as the mean over a set of indices that is defined by
\begin{align}
\label{eq:Bdef}
\Bcal = \left\{ s \leq t \leq N  :  \hat{G}_{j}(x_t) \leq \eta_{j,t} \; \forall j \in \{1, \ldots, m\} \right\}
\end{align}
Here $s$ denotes a burn-in period.
Note that we need to get an approximation of $g_j(x_t)$ for every stage $t$ for all $j$. A consistent estimate of $g_j(x_t)$ is given by
\vspace{-0.2cm}
\begin{align}
\label{eq:Gfunc1}
\hat{G}_{j}(x_t) := \frac{1}{L} \sum_{\ell = 1}^L G_j(x_t, \tilde{\xi}_{j,\ell}),
\end{align}
where $\tilde{\xi}_{j,\ell}$ for $\ell = 1, \ldots, L$ are i.i.d.\ observations from the distribution of $\xi_j$. Throughout this paper we assume that $L$ grows in the same rate as $N$. The full algorithm is  written out as Algorithm \ref{algo:mcsa}.

\begin{algorithm}[!h]
\caption{Multiple Cooperative Stochastic Approximation}\label{algo:mcsa}
\begin{algorithmic}[1]
\State Input : Initial $x_1 \in \Xcal$, Tolerances $\{\eta_j\}_t, \{\gamma\}_t$, The number of iterations $N$
\For{ $t = 1, \ldots, N$ }
\State Estimate $\hat{G}_{j}(x_t)$ for all $j \in 1,\ldots, m$ using \eqref{eq:Gfunc1}.
\If{$\hat{G}_{j}(x_t) \leq \eta_{j,t} \text{ for all } j$}
\State Set $h_t = F'(x_t, \xi_{0,t})$
\Else
\State Randomly select $j^*$ from $\{j : \hat{G}_{j}(x_t) > \eta_{j,t}\}$
\State Set $h_t = G_{j^*}'(x_t, \xi_{j^*,t})$
\EndIf
\State Compute $x_{t+1} = P_{x_t}(\gamma_t h_t)$
\EndFor
\State Define $\Bcal = \left\{ s \leq t \leq N  : \hat{G}_{j}(x_t) \leq \eta_{j,t} \; \forall j \in \{1, \ldots, m\} \right\}$
\State \Return $\hat\xb := \frac{\sum_{t \in \Bcal} x_t \gamma_t}{\sum_{t \in \Bcal} \gamma_t}$
\end{algorithmic}
\end{algorithm}

\vspace{-0.5cm}
\section{Convergence Analysis}
\label{sec:convergence}
\vspace{-0.3cm}
In this section, we study the convergence of the MCSA algorithm as described in Section \ref{sec:mcsa}. Specifically, we show that if we run $N$ iterations in Algorithm \ref{algo:mcsa}, then expected error and expected violation of the constraints is $\Ocal(1/\sqrt{N})$. Note that the presence of interrelated stochastic components of the MCSA algorithm poses a novel challenge in deriving the rate of convergence of MCSA. In particular, at every step MCSA makes a random decision for choosing $h_t$ depending on $x_t$ and $\{\tilde{\xi}_{j, \ell} : j = 1, \ldots, m,~ \ell = 1,\ldots, L \}$, while $x_t$ is a random variable depends on $x_{t-1}$, $\{\xi_{j,t} : j = 0,\ldots, m \}$ and a similar random decision made at the previous step for generating $x_{t-1}$. A careful consideration of this intertwined stochasticity reveals several gaps in the convergence analysis of the CSA algorithm as presented in \cite{lan2016algorithms}. For this reason, we refrained from deriving the rate of convergence of MCSA by extending the convergence analysis of \cite{lan2016algorithms}. Our rigorous convergence analysis is the main contribution of this paper and this also reinforces the $\Ocal(1/\sqrt{N})$ convergence rate of the CSA algorithm \cite{lan2016algorithms}.



\textbf{Proof Overview}: We begin with the identification of a sufficient condition on the threshold parameters $\{\eta_{j,t}\}, \{\gamma_t\}$ such that the solution $\hat{\xb}$ is well-defined and on average consists of at least $cN$ $x_t$'s for some $c \in (0, 1]$ (Theorem \ref{theorem:welldefined}). In order to show that result, we use a concentration bound (Lemma \ref{lemma:conc}) as well as the properties of a Donsker class of functions (Lemma \ref{lemma:donsker}). Finally, we prove the $\Ocal(1/\sqrt{N})$ convergence rate of the MCSA algorithm in Theorem \ref{thm:main1} using Theorem \ref{theorem:welldefined}, an application of the FKG inequality (Lemma \ref{lemma:fkg application}) and Lemma \ref{lemma:donsker}. We use the FKG inequality to untangle the dependency between the random variables $\{x_t : t \in \Bcal\}$ and the random set $\Bcal$. 

\vspace{-0.2cm}
\subsection{Supporting Results} 
\label{sec:supporting}
\vspace{-0.2cm}
\textbf{Bregman Divergence:} We begin by stating our first lemma connecting Bregman divergence and the proximal projection function \eqref{eq:prox_proj}. All proofs of lemmas in this sub-section are pushed to the supplementary material for brevity.
\begin{lemma}
\label{lemma:breg}
For any $x,z \in \Xcal$ and $y \in \real^n$, we have $B_{\psi}\left( z, P_x(y) \right) \leq B_{\psi}( z, x ) + y^T(z - x) + \frac{1}{2}\norm{y}_{{\psi}^*}^2 $
where $\norm{\cdot}_{{\psi}^*}$ is the dual norm of $\norm{\cdot}_{\psi}$.
\end{lemma}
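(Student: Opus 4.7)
The plan is to establish this as a consequence of the classical three-point lemma for mirror descent, which follows directly from the first-order optimality condition for $P_x(y)$. Write $u := P_x(y)$ for brevity. Since $u$ minimizes the strongly convex function $z' \mapsto \langle y, z'\rangle + B_\psi(z', x)$ over $\Xcal$, the optimality condition gives that for every $z \in \Xcal$,
\begin{equation*}
\langle y + \nabla \psi(u) - \nabla \psi(x),\, z - u \rangle \geq 0.
\end{equation*}
Rearranging yields $\langle y, u - z\rangle \leq \langle \nabla \psi(u) - \nabla \psi(x),\, z - u\rangle$.

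Next I would combine this with the elementary algebraic identity (obtained by expanding each Bregman divergence from its definition)
\begin{equation*}
B_\psi(z, x) - B_\psi(z, u) - B_\psi(u, x) = \langle \nabla \psi(u) - \nabla \psi(x),\, z - u \rangle,
\end{equation*}
which converts the inner-product bound above into the three-point inequality $\langle y, u - z\rangle \leq B_\psi(z, x) - B_\psi(z, u) - B_\psi(u, x)$. Rewriting this as an upper bound on $B_\psi(z, u)$ and splitting $\langle y, z - u\rangle = \langle y, z - x\rangle + \langle y, x - u\rangle$ gives
\begin{equation*}
B_\psi(z, u) \leq B_\psi(z, x) + \langle y, z - x\rangle + \langle y, x - u\rangle - B_\psi(u, x).
\end{equation*}

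The remaining task is to absorb the last two terms into $\tfrac{1}{2}\|y\|_{\psi^*}^2$. Here I would invoke the 1-strong convexity of $\psi$, which implies $B_\psi(u, x) \geq \tfrac{1}{2}\|u - x\|_\psi^2$, together with the Fenchel--Young inequality $\langle y, x - u\rangle \leq \tfrac{1}{2}\|y\|_{\psi^*}^2 + \tfrac{1}{2}\|x - u\|_\psi^2$. Adding these and cancelling the $\tfrac{1}{2}\|x-u\|_\psi^2$ terms yields $\langle y, x - u\rangle - B_\psi(u, x) \leq \tfrac{1}{2}\|y\|_{\psi^*}^2$, and substituting back finishes the proof. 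No step poses a real obstacle; the only care required is correctly stating the three-point identity and keeping track of signs when combining it with the variational inequality from the optimality condition.
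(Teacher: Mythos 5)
Your proof is correct and complete: the variational optimality condition for $P_x(y)$, the three-point identity for Bregman divergences, and the final Fenchel--Young plus $1$-strong-convexity cancellation are all applied with the right signs and yield exactly the claimed bound. The paper gives no proof of its own for this lemma (it only cites Nemirovski et al.), and your argument is precisely the standard derivation found in that reference, so this is essentially the same approach.
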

The following result follows from Lemma \ref{lemma:breg} and a careful analysis of Algorithm \ref{algo:mcsa}.
\begin{lemma}
\label{lemma:prop9}
For any $s \in T$, we have
\vspace{-0.2cm}
\begin{align}
\label{eq:prop9}
\sum_{t \in \Bcal}^N \gamma_t \langle F'(x_t, \xi_{0,t}), x_t - x \rangle &+ \sum_{j=1}^m \sum_{t \in \Ncal_j}\gamma_t \big(G_j(x_t, \xi_{j,t}) -  G_j(x, \xi_{j,t})\big) \leq B_{\psi}(x, x_{s}) \nonumber \\
&+ \sum_{t \in \Bcal} \frac{\gamma_t^2}{2} \norm{F'(x_t, \xi_{0,t})}_{\psi^*}^2 +  \sum_{j=1}^m \sum_{t \in \Ncal_j}\norm{G'(x_t, \xi_{j,t})}_{\psi^*}^2 \;\; \text{a.s.}
\end{align}
\end{lemma}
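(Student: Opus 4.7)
The plan is to apply Lemma~\ref{lemma:breg} at every iteration $t$ from $s$ to $N$ with the point $x$ from the statement playing the role of the test point, telescope the resulting Bregman terms, and then split the remaining sum into a piece indexed by $\Bcal$ and pieces indexed by $\Ncal_j$ according to which subgradient was actually selected by Algorithm~\ref{algo:mcsa} at step $t$. For the $\Ncal_j$ pieces I will convert inner products with subgradients of $G_j(\cdot, \xi_{j,t})$ into function-value differences using convexity.

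First I fix $x \in \Xcal$ and recall that the iterate evolves by $x_{t+1} = P_{x_t}(\gamma_t h_t)$, where $h_t = F'(x_t, \xi_{0,t})$ when $t \in \Bcal$ and $h_t = G_{j^\ast}'(x_t, \xi_{j^\ast, t})$ otherwise. Applying Lemma~\ref{lemma:breg} with the triple $(z, x, y) := (x, x_t, \gamma_t h_t)$ yields, almost surely,
\[
B_\psi(x, x_{t+1}) \;\le\; B_\psi(x, x_t) + \gamma_t \langle h_t,\, x - x_t \rangle + \tfrac{\gamma_t^2}{2}\,\|h_t\|_{\psi^\ast}^2 ,
\]
which rearranges to a one-step upper bound on $\gamma_t \langle h_t, x_t - x\rangle$. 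Summing this bound over $t = s, s+1, \dots, N$, the Bregman terms telescope into $B_\psi(x, x_s) - B_\psi(x, x_{N+1})$, and I drop the trailing non-positive term by non-negativity of the divergence to keep only $B_\psi(x, x_s)$ on the right.

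Next I decompose the summed left-hand side by conditioning on which branch of Algorithm~\ref{algo:mcsa} fired at step $t$. Writing $\Ncal_j := \{s \le t \le N : \hat G_j(x_t) > \eta_{j,t} \text{ and the random index } j^\ast \text{ equals } j\}$, the sets $\Bcal, \Ncal_1, \dots, \Ncal_m$ partition $\{s, \dots, N\}$ trajectory by trajectory. On $\Bcal$ the summand is $\gamma_t \langle F'(x_t, \xi_{0,t}), x_t - x\rangle$, and on $\Ncal_j$ it is $\gamma_t \langle G_j'(x_t, \xi_{j,t}), x_t - x\rangle$; the noise norms on the right-hand side split analogously.

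Finally, for each $t \in \Ncal_j$ I invoke convexity of $x \mapsto G_j(x, \xi_{j,t})$ to obtain $\langle G_j'(x_t, \xi_{j,t}), x_t - x \rangle \ge G_j(x_t, \xi_{j,t}) - G_j(x, \xi_{j,t})$, and substitute this lower bound in the $\Ncal_j$ portion of the summed inequality. Collecting the $\Bcal$ and $\Ncal_j$ pieces yields the claimed display. The step that needs real care is not a calculation but the bookkeeping: checking that $\Bcal \sqcup \bigsqcup_j \Ncal_j = \{s, \dots, N\}$ almost surely so that the trajectory-wise decomposition is legitimate despite $\Bcal$ and the $\Ncal_j$ being random, and being explicit that Lemma~\ref{lemma:breg} is applied with a \emph{deterministic} $x$ while every other quantity in the conclusion is a random variable, so the final inequality must be understood in the almost-sure sense before any expectation is taken in later results.
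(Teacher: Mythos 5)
Your proposal is correct and follows essentially the same route as the paper's own proof: apply Lemma~\ref{lemma:breg} with $(z,x,y)=(x,x_t,\gamma_t h_t)$, telescope the Bregman terms from $s$ to $N$, drop $B_\psi(x,x_{N+1})\ge 0$, split the sum according to whether $t\in\Bcal$ or $t\in\Ncal_j$, and lower-bound the $\Ncal_j$ inner products by function-value differences via convexity of $G_j(\cdot,\xi_{j,t})$. The bookkeeping point you flag --- that $\Bcal$ and the $\Ncal_j$ partition $\{s,\dots,N\}$ trajectory by trajectory and the inequality holds almost surely --- is exactly how the paper handles it.
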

We bound the right hand side of \eqref{eq:prop9} in Lemma \ref{lemma:prop9} by making the following assumption.
\begin{assumption}
\label{assump:norm}
For any $x \in \Xcal$, the following holds
\begin{align*}
\Ebb( \norm{F'(x, \xi_0)}_{{\psi}^*}^2)  \leq M_F^2 \qquad \text{ and } \qquad
\Ebb( \norm{G'_j(x, \xi_j)}_{{\psi}^*}^2)  \leq M_{G_j}^2 \;\; \forall \; j \in \{1, \ldots, m\}.
\end{align*}
\end{assumption}
\begin{lemma}
\label{lemma:prop9b}
Under Assumption \ref{assump:norm}, for any $s \in T$, we have
\vspace{-0.2cm}
\begin{align}
\label{eq:prop9b}
~\sum_{t = s}^N \sum_{j=1}^m \gamma_t~\Ebb\bigg[\big(g_j(x_t) -  g_j(x) \big) ~\one_{\{t \in \Ncal_j\}} \biggm| \btxi \bigg] ~&+~ \sum_{t = s}^N  \gamma_t ~\Ebb \bigg[ \langle f'(x_t), x_t - x \rangle ~\one_{\{t \in \Bcal\}} \biggm| \btxi \bigg] \nonumber \\
& \leq ~ \Ebb \bigg[ B_{\psi}(x, x_{s}) \bigg| \btxi \bigg]  ~+~ \frac{M^2}{2}~ \sum_{t = s}^N\gamma_t^2,
\end{align}
where $M^2 := \max \{M_F^2, M_{G_1}^2, \ldots, M_{G_m}^2\}$, $\btxi :=  \{ \{ \tilde{\xi}_{j,\ell}\}_{\ell = 1}^L\}_{j=1}^m$ and $\Ncal_j \subseteq \{s, \ldots, N\}$ denotes the random set of indices for which $j^* = j$ in Algorithm \ref{algo:mcsa}.
\end{lemma}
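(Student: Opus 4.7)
The plan is to take the conditional expectation given $\btxi$ of the almost-sure inequality \eqref{eq:prop9} from Lemma~\ref{lemma:prop9}, and then to apply Assumption~\ref{assump:norm} to bound the squared-norm terms. As a first step, I would rewrite every sum over $\Bcal$ or $\Ncal_j$ with explicit indicators, so that the summation ranges uniformly over $t \in \{s, \ldots, N\}$, for instance $\sum_{t \in \Bcal} a_t = \sum_{t=s}^N a_t \one_{\{t \in \Bcal\}}$. Let $\Fcal_{t-1}$ denote the $\sigma$-algebra generated by all samples $\xi_{\cdot, r}$ and random selections $j^*_r$ for $r < t$, so that $x_t$ is $\Fcal_{t-1}$-measurable. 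Since $\hat{G}_j(x_t)$ is a deterministic function of $x_t$ and $\btxi$, the event $\{t \in \Bcal\}$ is $\sigma(\Fcal_{t-1}, \btxi)$-measurable, and $\{t \in \Ncal_j\}$ is $\sigma(\Fcal_{t-1}, \btxi, j^*_t)$-measurable. Crucially, the fresh samples $\xi_{0,t}, \xi_{j,t}$ drawn at step $t$ are independent of $(\Fcal_{t-1}, \btxi, j^*_t)$ by construction.

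Applying the tower property together with these independence relations, I would compute $\Ebb[\langle F'(x_t, \xi_{0,t}), x_t - x\rangle \one_{\{t \in \Bcal\}} \mid \Fcal_{t-1}, \btxi] = \langle f'(x_t), x_t - x \rangle \one_{\{t \in \Bcal\}}$, and analogously $\Ebb[(G_j(x_t, \xi_{j,t}) - G_j(x, \xi_{j,t})) \one_{\{t \in \Ncal_j\}} \mid \Fcal_{t-1}, \btxi, j^*_t] = (g_j(x_t) - g_j(x)) \one_{\{t \in \Ncal_j\}}$. Taking a further expectation conditional on $\btxi$ and summing over $t$ and $j$ yields the two terms on the left-hand side of \eqref{eq:prop9b}. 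The same measurability argument lets me pull the indicators outside the inner conditional expectation of the squared-norm terms on the right-hand side of \eqref{eq:prop9}, while Assumption~\ref{assump:norm} gives $\Ebb[\|F'(x_t, \xi_{0,t})\|_{\psi^*}^2 \mid \Fcal_{t-1}, \btxi] \leq M_F^2 \leq M^2$, and similarly $M_{G_j}^2 \leq M^2$ for the constraint terms. This produces an upper bound of $\sum_{t=s}^N \frac{\gamma_t^2 M^2}{2}\bigl(\Ebb[\one_{\{t \in \Bcal\}} \mid \btxi] + \sum_{j=1}^m \Ebb[\one_{\{t \in \Ncal_j\}} \mid \btxi]\bigr)$. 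Since the algorithm assigns each $t$ either to $\Bcal$ or to exactly one $\Ncal_j$, we have $\one_{\{t \in \Bcal\}} + \sum_{j=1}^m \one_{\{t \in \Ncal_j\}} = 1$ almost surely, so this collapses to $\frac{M^2}{2}\sum_{t=s}^N \gamma_t^2$. The Bregman term $B_{\psi}(x, x_s)$ simply passes through the conditional expectation, yielding $\Ebb[B_{\psi}(x, x_s) \mid \btxi]$, and combining the two sides gives \eqref{eq:prop9b}.

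The main technical obstacle is the measurability bookkeeping: $j^*_t$ is chosen uniformly at random from a set that itself depends on $\btxi$ through the values $\hat{G}_j(x_t)$, so a careful filtration argument is needed to ensure that $\one_{\{t \in \Ncal_j\}}$ may be pulled through the conditional expectation and that $\xi_{j,t}$ remains independent of $(\Fcal_{t-1}, \btxi, j^*_t)$. This is exactly the intertwined stochasticity that the authors identify as a gap in the original CSA analysis of \cite{lan2016algorithms}, so this step warrants being spelled out explicitly rather than glossed over.
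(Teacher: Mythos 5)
Your proposal is correct and takes essentially the same route as the paper's proof: condition on the past iterate and $\btxi$ so that the indicators $\one_{\{t \in \Bcal\}}$, $\one_{\{t \in \Ncal_j\}}$ factor out of the inner expectations, replace the stochastic gradients and function values by $f'$, $g_j$ via independence of the fresh samples, bound the squared-norm terms with Assumption \ref{assump:norm}, and use that each $t$ falls in exactly one of $\Bcal$ or some $\Ncal_j$. Your explicit filtration $\Fcal_{t-1}$ is just a more careful rendering of the paper's informal conditioning on ``$x_t$ and $\btxi$'', which is in keeping with the paper's stated aim of tightening the measurability bookkeeping.
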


\textbf{Concentration Bounds:} We use a concentration result to achieve the optimal convergence. Towards that, we first define, 
\vspace{-0.2cm}
\begin{align*}
\zeta_t = \sum_{j=1}^m \gamma_t \left( g_j(x_t)\one_{\{t \in \Ncal_j\}} -  \Ebb \left[ g_j(x_t) ~\one_{\{t \in \Ncal_j\}} \bigg| \btxi \right]\right).
\end{align*}
Now, we assume that the distribution of $\zeta_t$ has a light-tail.   
\begin{assumption}
\label{assump:lighttail}
There exists a $\sigma^2$ such that $\Ebb[ \zeta_t^2 / (\gamma_t^2 \sigma^2) ~ | ~ \btxi ] \leq  \exp(1).$
\end{assumption}
\begin{lemma}
\label{lemma:conc}
Under Assumption \ref{assump:lighttail}, for any $\lambda > 0$,
$P\big(\sum_{t=s}^{N} \zeta_t > \lambda \sqrt{\sum_{t=s}^N \gamma_t^2 \sigma^2} \;\big|\;\btxi \big) \leq \exp\big(-\frac{\lambda^2}{3}\big).$
\end{lemma}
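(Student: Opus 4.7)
The plan is to apply the Cram\'er--Chernoff (exponential Markov) method to the partial sum $S_N := \sum_{t=s}^N \zeta_t$ conditional on $\btxi$. The natural filtration to use is $\Fcal_t := \sigma\bigl( \btxi,\, \{\xi_{j,\tau}, j^*_\tau\}_{\tau \leq t,\, j=0,\ldots,m} \bigr)$, which refines $\sigma(\btxi)$ by adjoining all innovations and random index selections used by Algorithm~\ref{algo:mcsa} through time $t$. The first thing I would do is verify that $\{\zeta_t\}_{t=s}^N$ is a martingale difference sequence relative to $\{\Fcal_t\}$, so that $S_N$ is a martingale. Given $\Fcal_{t-1}$ the iterate $x_t$ is determined, and the only fresh randomness feeding $\zeta_t$ is $\{\xi_{j,t}\}_{j=0}^m$ together with $j^*_t$, all of which are independent of $\Fcal_{t-1}$. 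This independence lets one identify the centering in the definition of $\zeta_t$ (by $\Ebb[\cdot\,|\,\btxi]$) with the conditional mean given $\Fcal_{t-1}$, so that $\Ebb[\zeta_t | \Fcal_{t-1}] = 0$.

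Next, I would translate Assumption~\ref{assump:lighttail}, read in the standard Orlicz-$\psi_2$ form $\Ebb[\exp(\zeta_t^2/(\gamma_t^2 \sigma^2)) | \Fcal_{t-1}] \leq e$, into a conditional moment generating function bound. A routine Taylor-expansion argument (cf.\ Nemirovski--Juditsky--Lan--Shapiro 2009, Lemma~2.3) yields
\[
\Ebb\bigl[ \exp(\mu \zeta_t) \bigm| \Fcal_{t-1} \bigr] \leq \exp\bigl( \tfrac{3}{4}\, \mu^2 \gamma_t^2 \sigma^2 \bigr) \qquad \text{for every } \mu \in \RR.
\]
Iterating the tower property by peeling one factor at a time from $t=N$ down to $t=s$ then gives
\[
\Ebb\bigl[ \exp(\mu S_N) \bigm| \btxi \bigr] \leq \exp\Bigl( \tfrac{3}{4}\, \mu^2 \textstyle\sum_{t=s}^N \gamma_t^2 \sigma^2 \Bigr).
\]

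Finally, a Chernoff step closes the proof. Writing $V := \sum_{t=s}^N \gamma_t^2 \sigma^2$, the conditional Markov inequality applied to $e^{\mu S_N}$ yields, for every $\mu > 0$,
\[
P\bigl( S_N > \lambda \sqrt{V} \bigm| \btxi \bigr) \leq \exp\bigl( \tfrac{3}{4}\, \mu^2 V - \mu \lambda \sqrt{V} \bigr),
\]
and optimizing over $\mu$ by taking $\mu = 2\lambda/(3 \sqrt{V})$ produces exactly the claimed $\exp(-\lambda^2/3)$. The main obstacle I expect is verifying the martingale-difference structure in Step~1: because the indicator $\one_{\{t \in \Ncal_j\}}$ is generated by the \emph{randomized} selection $j^*_t$ on top of the iterate randomness in $x_t$, one has to bookkeep carefully which randomness is averaged out in the $\btxi$-conditional centering and which remains adapted to $\Fcal_{t-1}$; this is the delicate point that licenses the iterative MGF bound. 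Once the martingale-difference property is secured, Steps~2 and 3 are entirely routine.
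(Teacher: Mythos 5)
Your route is the same as the paper's: the paper verifies $\Ebb[\zeta_t\,|\,\btxi]=0$ and the conditional light-tail bound and then invokes Lemma 7 of Nemirovski et al.\ (2009) as a black box, and your Steps 2 and 3 are precisely the content of that cited lemma (the $\psi_2$-to-MGF conversion with constant $3/4$ and the Chernoff optimization at $\mu=2\lambda/(3\sqrt{V})$, which indeed yields $\exp(-\lambda^2/3)$). So the only substantive issue is the one you yourself flag as delicate: whether $\{\zeta_t\}$ is a martingale difference sequence with respect to your filtration $\{\Fcal_t\}$.

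Your justification of that step does not work as written. The centering in the definition of $\zeta_t$ is $\Ebb[g_j(x_t)\one_{\{t\in\Ncal_j\}}\,|\,\btxi]$, which integrates out \emph{all} of the randomness in $x_t$ and in the selection $j^*_t$, and is therefore a constant given $\btxi$. By contrast, $\Ebb[g_j(x_t)\one_{\{t\in\Ncal_j\}}\,|\,\Fcal_{t-1}]$ is computed with $x_t$ held fixed (it is $\Fcal_{t-1}$-measurable) and equals $g_j(x_t)$ times the conditional probability that $j^*_t=j$, a nondegenerate function of $x_t$. These two quantities coincide only when $x_t$ is degenerate given $\btxi$, so in general $\Ebb[\zeta_t\,|\,\Fcal_{t-1}]\neq 0$, and the factor-peeling MGF bound in Step 2 cannot be iterated as stated; the same caveat applies to upgrading Assumption \ref{assump:lighttail}, which conditions only on $\btxi$, to a bound conditional on $\Fcal_{t-1}$. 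To be fair, the paper's own proof asserts only the weaker property $\Ebb[\zeta_t\,|\,\btxi]=0$ before citing the martingale lemma, so it leaves exactly the same point implicit; but since your write-up makes the martingale-difference claim explicit and rests the entire argument on it, you would need either to re-center $\zeta_t$ at its $\Fcal_{t-1}$-conditional mean (and then control the discrepancy between the two centerings separately) or to supply a different argument for why the $\btxi$-conditional centering suffices.
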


\textbf{Donsker Class:} Recall that we approximate $g_j(x_t)$ by $\hat{G}_{j}(x_t)$ in Algorithm \ref{algo:mcsa}. Although the central limit theorem guarantees the convergence of $\sqrt{L}(\hat{G}_{j}(x) - g_j(x))$ to a zero mean Gaussian distribution for each $x$, it does not ensure a ``uniform convergence'' for all $x \in \Xcal$ as in Lemma \ref{lemma:donsker}. To achieve that we show $\Fcal_j = \{ G_j(x, \cdot): x \in \Xcal \}$ is a \emph{Donsker class} \cite{VaartWellner96} under the following assumption.


\begin{assumption}
\label{assump:donsker}
For each $j = 1, \ldots, m$, the class of functions $\Fcal_j = \{ G_j(x, \cdot): x \in \Xcal \}$ satisfies the following Lipschitz condition:
\vspace{-0.2cm}
\[
|G_j(x, \xi) - G_j(y, \xi)| \leq |x - y|~ \phi(\xi),
\]
for all $x,y \in \Xcal$ and for some function $\phi$ satisfying $\Ebb[\phi(\xi)^2] < \infty$.
\end{assumption}
The Lipschitz condition in Assumption \ref{assump:donsker} ensures $G_j(\cdot, \xi)$ is sufficiently smooth for each $\xi$. It is easy to see that Assumption \ref{assump:donsker} holds if the derivative $G_j'(x, \xi)$ is uniformly bounded by a constant for all $\xi$. We use Assumption \ref{assump:donsker} to prove the following lemma. 

\begin{lemma}
\label{lemma:donsker}
Under Assumption \ref{assump:donsker},
\vspace{-0.2cm}
\[
\sup_{x \in \Xcal}\sqrt{L}\left|\hat{G}_{j}(x) - g_j(x)\right| \overset{d}{\longrightarrow} \sup_{x \in \Xcal} \left|\mathbb{G}(x) \right| ~~\text{and}~~ \Ebb\bigg(  \sup_{x \in \Xcal}\left| \hat{G}_{j}(x) - g_j(x) \right| \bigg) = \mathcal{O}(1 / \sqrt{L}),
\]
where $\mathbb{G}(x)$ is a zero mean Gaussian process with covariance function $\Ebb[\mathbb{G}(x)\mathbb{G}(y)] = \Ebb[G(x,\xi)G(y,\xi)] - \Ebb[G(x,\xi)] \Ebb[G(y,\xi)]$.
\end{lemma}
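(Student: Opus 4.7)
The plan is to establish that $\Fcal_j = \{G_j(x,\cdot) : x \in \Xcal\}$ is a $P$-Donsker class and then deduce both conclusions via the functional CLT, the continuous mapping theorem, and a standard bracketing maximal inequality for empirical processes.

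First I would fix a base point $x_0 \in \Xcal$ and combine the Lipschitz bound in Assumption \ref{assump:donsker} with the compactness of $\Xcal$ to exhibit the envelope
\[
\Phi_j(\xi) := |G_j(x_0,\xi)| + D\,\phi(\xi), \qquad D := \mathrm{diam}(\Xcal),
\]
which dominates $|G_j(x,\cdot)|$ uniformly on $\Xcal$ and lies in $L^2(P)$ (the Lipschitz term because $\Ebb\phi(\xi)^2 < \infty$, the base term from the implicit regularity of $G_j(x_0,\cdot)$). Since $\Xcal$ is a compact subset of $\real^n$, a volumetric argument yields an $\epsilon$-net of cardinality at most $(CD/\epsilon)^n$ in the Euclidean metric. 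The Lipschitz inequality then converts each ball of radius $\epsilon$ around a net point into an $L^2(P)$-bracket of size $2\epsilon\|\phi\|_{L^2(P)}$ for $\Fcal_j$, so
\[
\log N_{[\,]}\bigl(\epsilon,\Fcal_j,L^2(P)\bigr) \lesssim n\log(1/\epsilon),
\]
and the bracketing integral $\int_0^1\sqrt{\log N_{[\,]}(\epsilon,\Fcal_j,L^2(P))}\,d\epsilon$ is finite. The bracketing Donsker theorem (Theorem 2.5.6 in \cite{VaartWellner96}) then shows that $\Fcal_j$ is $P$-Donsker, so
\[
\sqrt{L}\bigl(\hat G_j - g_j\bigr) \overset{d}{\longrightarrow} \mathbb{G} \quad \text{in } \ell^\infty(\Xcal),
\]
where $\mathbb{G}$ is the Gaussian process with the covariance stated in the lemma.

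The first claim is then immediate: the sup-norm is continuous on $\ell^\infty(\Xcal)$, and the continuous mapping theorem transfers the weak convergence to $\sup_{x\in\Xcal}\sqrt{L}|\hat G_j(x)-g_j(x)| \overset{d}{\longrightarrow} \sup_{x\in\Xcal}|\mathbb{G}(x)|$. For the expectation bound I would invoke the bracketing maximal inequality (Theorem 2.14.2 in \cite{VaartWellner96}),
\[
\Ebb\Big[\sup_{x\in\Xcal}\sqrt{L}\,|\hat G_j(x)-g_j(x)|\Big] \lesssim \|\Phi_j\|_{L^2(P)} + \int_0^1\sqrt{\log N_{[\,]}\bigl(\epsilon\|\Phi_j\|_{L^2(P)},\Fcal_j,L^2(P)\bigr)}\,d\epsilon,
\]
whose right-hand side is a finite constant independent of $L$; dividing by $\sqrt{L}$ gives the $\mathcal{O}(1/\sqrt{L})$ rate. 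The only real obstacle is translating the Euclidean Lipschitz condition into the $L^2(P)$ bracketing entropy, which is precisely what the envelope $\Phi_j$ accomplishes; modulo this step and the tacit square-integrability of $G_j(x_0,\cdot)$, the argument is a textbook application of empirical-process theory, with Assumption \ref{assump:donsker} supplying all the smoothness needed to tame $\Fcal_j$.
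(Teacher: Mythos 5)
Your proposal is correct and follows essentially the same route as the paper: bound the bracketing entropy of $\Fcal_j$ via the Lipschitz condition and compactness of $\Xcal$, conclude the Donsker property, apply the continuous mapping theorem for the weak-convergence claim, and invoke Theorem 2.14.2 of \cite{VaartWellner96} for the expectation bound. The only difference is cosmetic: you derive the polynomial bracketing bound by hand with a volumetric net and an explicit envelope (usefully flagging the tacit square-integrability of $G_j(x_0,\cdot)$), whereas the paper cites Theorem 2.7.11 of \cite{VaartWellner96} for the same fact.
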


%
\textbf{FKG Inequality:}
The Fortuin-Kasteleyn-Ginibre (FKG) \cite{FKG71} inequality asserts positive correlation between two increasing functions on a finite partially ordered set.
\begin{lemma}
\label{lemma:fkg}
Let $L$ be partially ordered set such that $(L, \vee, \wedge)$ be a finite distributive lattice. Further, let $\mu$ be a probability measure on $L$ such that $\mu(\ell_1\wedge \ell_2) \mu(\ell_1\vee \ell_2) \geq \mu(\ell_1) \mu(\ell_2)$. If both $h_1$ and $h_2$ are increasing functions on $L$ with respect to the partial ordering of $L$, then
\vspace{-0.2cm}
\[
\sum_{\ell \in L} h_1(\ell)h_2(\ell) \mu(\ell) \geq \bigg(\sum_{\ell \in L} h_1(\ell) \mu(\ell) \bigg) \bigg(\sum_{\ell \in L} h_2(\ell) \mu(\ell) \bigg).
\]
\end{lemma}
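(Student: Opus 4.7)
The plan is to prove the FKG inequality by first reducing to a canonical lattice and then inducting on its dimension. By Birkhoff's representation theorem, every finite distributive lattice $L$ embeds as a sublattice of a Boolean cube $\{0,1\}^n$ with componentwise meet and join; by extending $\mu$ to be zero outside the image (or by working directly on the image), it suffices to prove the inequality for $L=\{0,1\}^n$ with the product partial order. The log-supermodularity hypothesis $\mu(\ell_1\wedge\ell_2)\mu(\ell_1\vee\ell_2)\ge\mu(\ell_1)\mu(\ell_2)$ and the monotonicity of $h_1,h_2$ transfer to this enlarged lattice, so I lose no generality.

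For the base case $n=1$, the inequality reduces to a one-line computation: writing $p=\mu(1)$, $q=\mu(0)=1-p$, and $a_i=h_i(1)-h_i(0)$, the difference between the two sides equals $pq\,a_1 a_2$, which is nonnegative because both $h_1$ and $h_2$ are increasing. The log-supermodularity condition is vacuous here since the lattice is a chain.

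For the inductive step I condition on the last coordinate. Write $\omega=(\omega',\omega_n)$ with $\omega'\in\{0,1\}^{n-1}$, let $\nu(\omega')=\mu(\omega',0)+\mu(\omega',1)$ be the marginal, and define the conditional averages
\[
H_i(\omega')=\frac{h_i(\omega',0)\mu(\omega',0)+h_i(\omega',1)\mu(\omega',1)}{\nu(\omega')}.
\]
I would first apply the one-dimensional base case slicewise, which yields $\sum_{\omega_n} h_1 h_2\,\mu(\omega',\cdot)\ge H_1(\omega')H_2(\omega')\nu(\omega')$, then sum over $\omega'$. It then remains to show $\sum_{\omega'} H_1 H_2\,\nu \ge (\sum_{\omega'} H_1 \nu)(\sum_{\omega'} H_2 \nu)$ by the inductive hypothesis applied on $\{0,1\}^{n-1}$ (noting $\sum_{\omega'} H_i\nu=\sum_\omega h_i\mu$). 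Chaining the two inequalities gives the desired bound.

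The main obstacle is verifying the two hypotheses needed to invoke induction: (i) the marginal $\nu$ satisfies the FKG lattice condition on $\{0,1\}^{n-1}$, and (ii) each $H_i$ is increasing on $\{0,1\}^{n-1}$. Claim (i) follows by expanding $\nu(\omega'\wedge\tau')\nu(\omega'\vee\tau')-\nu(\omega')\nu(\tau')$ into four pairwise products of $\mu$-values and applying the FKG condition of $\mu$ pairwise on the last coordinate. Claim (ii) is the more delicate point: for fixed $\omega'\le\tau'$, showing $H_i(\omega')\le H_i(\tau')$ requires a cross-multiplied inequality among the four values $\mu(\omega',0),\mu(\omega',1),\mu(\tau',0),\mu(\tau',1)$, which again reduces to the log-supermodularity of $\mu$ combined with the monotonicity of $h_i$. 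Once both claims are in place, the induction closes and the lemma follows.
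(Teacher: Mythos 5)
The paper does not actually prove this lemma: it is the classical FKG correlation inequality, and the paper defers entirely to the original 1971 article of Fortuin, Kasteleyn and Ginibre. Your proposal therefore attempts more than the paper does, and the route you pick --- embed the finite distributive lattice into $\{0,1\}^n$ via Birkhoff's theorem, extend $\mu$ by zero and $h_i$ monotonically, and induct on $n$ by conditioning on the last coordinate --- is the standard modern proof. Your claim (ii) is sound as described: for $\omega'\le\tau'$, the lattice condition applied to the pair $(\omega',1)$ and $(\tau',0)$ (whose meet is $(\omega',0)$ and join is $(\tau',1)$) gives $\mu(\omega',0)\mu(\tau',1)\ge\mu(\omega',1)\mu(\tau',0)$, i.e.\ the conditional law on the slice at $\tau'$ stochastically dominates the one at $\omega'$, and together with $h_i(\tau',\cdot)\ge h_i(\omega',\cdot)$ this yields $H_i(\tau')\ge H_i(\omega')$.

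The genuine gap is in claim (i). Write $a_i=\mu(\omega'\wedge\tau',i)$, $b_j=\mu(\omega'\vee\tau',j)$, $c_i=\mu(\omega',i)$, $d_j=\mu(\tau',j)$; the hypothesis gives $c_id_j\le a_{i\wedge j}\,b_{i\vee j}$, so the diagonal terms satisfy $c_0d_0\le a_0b_0$ and $c_1d_1\le a_1b_1$, but \emph{both} cross terms $c_0d_1$ and $c_1d_0$ are bounded by the \emph{same} product $a_0b_1$. Summing the pairwise bounds gives $(c_0+c_1)(d_0+d_1)\le a_0b_0+2a_0b_1+a_1b_1$, which exceeds $(a_0+a_1)(b_0+b_1)$ unless $a_1b_0\ge a_0b_1$; the term $a_1b_0$ is never produced by any single application of the lattice condition, so the ``pairwise'' argument does not close. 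The inequality you actually need, $a_0b_1+a_1b_0\ge c_0d_1+c_1d_0$ under the four hypotheses above, is true but is precisely the nontrivial $n=1$ case of the Ahlswede--Daykin four functions theorem, and it requires a separate algebraic argument (e.g.\ substitute $b_0\ge c_0d_0/a_0$ and $a_1\ge c_1d_1/b_1$, clear denominators, and use $a_0b_1\ge c_0d_1$ and $a_0b_1\ge c_1d_0$, with degenerate cases where some value vanishes handled separately). Two smaller points also need patching: slices with $\nu(\omega')=0$ must be handled so that $H_i$ can still be chosen increasing (or prove the result first for strictly positive $\mu$ and pass to the limit), and the monotone extension of $h_i$ from the embedded copy of $L$ to all of $\{0,1\}^n$ should be made explicit, e.g.\ $\tilde h_i(\omega)=\max\{h_i(a):a\in L,\ a\le\omega\}$ with $\tilde h_i(\omega)$ set to $\min_{a \in L} h_i(a)$ when that set is empty.
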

The following result follows from Lemma \ref{lemma:fkg} and the fact that $f(x_t) - f(x^*) \geq 0$.
\begin{lemma}
\label{lemma:fkg application}
Let $\{\gamma_t\}$, $\{x_t\}$ be as in Algorithm \ref{algo:mcsa} and let $x^* := \underset{x}{\argmin} f(x)$. Then
\vspace{-0.2cm}
\[
\Ebb\bigg[\frac{\sum_{t \in \Bcal} \gamma_t (f(x_t) - f(x^*))}{\sum_{t \in \Bcal} \gamma_t}\bigg] \leq \Ebb\bigg[\sum_{t \in \Bcal} \gamma_t (f(x_t) - f(x^*))\bigg]~\Ebb\bigg[\frac{1}{\sum_{t \in \Bcal} \gamma_t}\bigg]. 
\]
\end{lemma}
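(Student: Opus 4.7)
The plan is to reformulate the stated inequality as a covariance inequality and verify it via the FKG inequality (Lemma \ref{lemma:fkg}). Let $X := \sum_{t \in \Bcal} \gamma_t (f(x_t) - f(x^*)) \geq 0$ and $Y := \sum_{t \in \Bcal} \gamma_t > 0$; the claim is equivalent to $\cov(X, Y^{-1}) \leq 0$. The intuition is that enlarging $\Bcal$ simultaneously increases $X$ (because $f(x_t) - f(x^*) \geq 0$, the key fact flagged in the remark preceding the lemma) and increases $Y$ (because $\gamma_t > 0$), so $X$ and $Y^{-1}$ should be negatively associated.

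Concretely, I would set up the finite distributive lattice $L = \{0,1\}^{N - s + 1}$ under componentwise partial order, whose elements correspond to realizations of the indicator vector $\mathbf{I} := (\one_{\{t \in \Bcal\}})_{t=s}^N$. I would then condition on the trajectory $\{x_t\}_{t=s}^N$: because the Monte Carlo samples $\{\tilde\xi_{j,\ell}\}_\ell$ used to form $\hat G_j(x_t)$ in \eqref{eq:Gfunc1} are drawn freshly and independently across $t$, the indicators $I_t$ are conditionally independent given the trajectory. Hence the conditional law of $\mathbf{I}$ is a product measure, which trivially satisfies the FKG lattice condition with equality. The test functions $h_1(\mathbf{I}) := \sum_t I_t \gamma_t (f(x_t) - f(x^*))$ and $h_2(\mathbf{I}) := -\big(\sum_t I_t \gamma_t\big)^{-1}$ are both coordinate-wise increasing on $L$ (the latter because the minus sign reverses the monotonicity of $Y^{-1}$), so applying Lemma \ref{lemma:fkg} and restoring signs yields the conditional bound
\[
\Ebb[X/Y \mid \{x_t\}] \;\leq\; \Ebb[X \mid \{x_t\}]\, \Ebb[1/Y \mid \{x_t\}].
\]

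The main obstacle will be removing the conditioning to obtain $\Ebb[X/Y] \leq \Ebb[X]\,\Ebb[1/Y]$: taking outer expectations only gives $\Ebb[X/Y] \leq \Ebb\big[\Ebb[X\mid\{x_t\}]\, \Ebb[1/Y\mid\{x_t\}]\big]$, and one must further argue that the two conditional expectations are themselves negatively correlated as functions of the trajectory. The heuristic is the same---paths that remain feasible longer produce larger $Y$ and, per step, a typically smaller accumulated contribution to $X$---but the trajectory lives in a continuous space and is coupled with $\mathbf{I}$ through earlier algorithmic choices, so a rigorous second stage will likely require either a discretization of the path space followed by another FKG or Ahlswede--Daykin application, or a direct monotone coupling tailored to Algorithm \ref{algo:mcsa}. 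One also needs Theorem \ref{theorem:welldefined} to ensure that the event $\Bcal \neq \emptyset$ has sufficiently high probability that $1/Y$ is well-defined on the conditioning event.
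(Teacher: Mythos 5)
Your reduction of the lemma to a negative-correlation statement between $X=\sum_{t\in\Bcal}\gamma_t(f(x_t)-f(x^*))$ and $1/Y$ with $Y=\sum_{t\in\Bcal}\gamma_t$ is the right starting point, but the argument as proposed has a genuine gap that you yourself flag and do not close. Two problems. First, the conditional step rests on a premise that fails for this algorithm: the Monte Carlo samples $\btxi=\{\{\tilde\xi_{j,\ell}\}_{\ell=1}^L\}_{j=1}^m$ used to form $\hat G_j(x_t)$ in \eqref{eq:Gfunc1} are \emph{not} redrawn at each iteration --- they are held fixed across $t$ (this is precisely why Lemmas \ref{lemma:prop9b} and \ref{lemma:conc} condition on the single object $\btxi$). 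Once you condition on the trajectory $\{x_t\}$ together with $\btxi$, each indicator $\one_{\{t\in\Bcal\}}$ is a deterministic function of the conditioning variables, the conditional law of the indicator vector is a point mass, and your conditional FKG inequality holds trivially with equality --- it carries no content. (Even in an online variant with fresh samples, conditional independence of the indicators given the trajectory is doubtful, since $x_{t+1}$ is produced by the branch chosen at step $t$ and the trajectory therefore nearly reveals the indicators.) Second, and decisively, the de-conditioning step $\Ebb\big[\Ebb[X\mid\cdot]\,\Ebb[1/Y\mid\cdot]\big]\le\Ebb[X]\,\Ebb[1/Y]$ is exactly the substance of the lemma, and you leave it as a heuristic plus a list of tools that might work. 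The proof stops where the difficulty begins.

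For comparison, the paper sidesteps conditioning on the trajectory entirely by applying FKG at the level of the random set $\Bcal$: it places the measure $\mu(B)=P(\Bcal=B)$ on the power set of $\{s,\ldots,N\}$ and takes $h_1(B)=\Ebb\big[\sum_{t\in B}\gamma_t(f(x_t)-f(x^*))\mid\Bcal=B\big]$ and $h_2(B)=-1/\sum_{t\in B}\gamma_t$. Since $Y$ is a deterministic function of $\Bcal$, one has $\Ebb[X/Y]=-\sum_B h_1(B)h_2(B)\mu(B)$, and a single application of Lemma \ref{lemma:fkg} yields the claim, with all trajectory dependence absorbed into the definition of $h_1$ (the delicate point in that route being the asserted monotonicity of $h_1$ in $B$, which involves the conditional law of the trajectory given $\Bcal=B$). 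To salvage your two-stage plan you would need to actually exhibit the monotone coupling or carry out the second FKG/Ahlswede--Daykin application on path space; as written, the argument establishes a tautology followed by an unproven claim.
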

\vspace{-0.3cm}
\subsection{Main Result} 
\label{sec:main_result}
\vspace{-0.3cm}
First, we present a sufficient condition under which $\Ebb[|\Bcal|/N] \geq c$. We define, 
\vspace{-0.2cm}
\begin{align}
\label{eq:tau_def}
\tau := D_{\Xcal}^2 +   \frac{M^2}{2}  \sum_{t = s}^N \gamma_t^2,
\end{align}
where $D_\Xcal = \sqrt{\max_{x, z \in \Xcal} B_{\psi}(z, x)}$ denotes the diameter of $\Xcal$.

\begin{theorem}
\label{theorem:welldefined}
Let $\{\gamma_t\}$, $\eta_{j,t}$, $\{x_t\}$ be as in Algorithm \ref{algo:mcsa} and let $x^* := \underset{x}{\argmin} f(x)$. Let
\vspace{-0.3cm}
\begin{align}
\label{eq:welldefined}
\gamma_t = \frac{\sqrt{2}K_1}{\sqrt{N}} ~~\text{and}~~ \eta_{j,t} = \frac{\sqrt{2}K_2}{\sqrt{N}} ~~\text{for all $t$},
\end{align}
for some sufficiently large constant $K_1$ and $K_2$. Then one of the following two conditions hold:
\vspace{-0.2cm}
\begin{enumerate}
\item $P\left(\frac{|\Bcal|}{N} > \frac{1}{2} - \frac{s-1}{N} - \frac{\tau}{2K_1K_2}\right) > \frac{1}{4}$ which implies $\Ebb(|\Bcal| / N) > c$ for some $c \in (0,1]$
\vspace{-0.2cm}
\item $\Bcal \neq \emptyset$ and  $\sum_{t = s}^N  \gamma_t \langle f'(x_t), x_t - x^* \rangle ~\one_{\{t \in \Bcal\}}  \leq 0$ almost surely.
\end{enumerate}
\end{theorem}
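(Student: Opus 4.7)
The plan is to apply Lemma \ref{lemma:prop9b} at $x=x^*$ and use the feasibility of $x^*$ together with the algorithmic definition of $\Ncal_j$ to extract a conditional lower bound on $\Ebb[|\Bcal|/N\mid\btxi]$, then convert that into the unconditional probability bound of Case~1 by a one-line Markov argument. Case~2 will enter only as a fallback for a degenerate edge case in which the master inequality reduces to $0\leq\tau$.

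First, I would invoke Lemma \ref{lemma:prop9b} at $x=x^*$. Three simple facts simplify the resulting inequality: (i) $x^*$ is feasible, so $g_j(x^*)\leq 0$ and $g_j(x_t)-g_j(x^*)\geq g_j(x_t)$; (ii) by convexity of $f$, $\langle f'(x_t),x_t-x^*\rangle\geq f(x_t)-f(x^*)\geq 0$; and (iii) $B_\psi(x^*,x_s)\leq D_\Xcal^2$, so the right-hand side is bounded by $\tau$. Next, on $\{t\in\Ncal_j\}$ the algorithm's decision rule enforces $\hat{G}_j(x_t)>\eta_{j,t}$; writing $\delta_j:=\sup_{x\in\Xcal}|\hat{G}_j(x)-g_j(x)|$ and $\Delta:=\max_j\delta_j$, this forces $g_j(x_t)>\eta_{j,t}-\Delta$. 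Summing over $j$ (noting that the $\Ncal_j$ partition $\{s,\ldots,N\}\setminus\Bcal$) and substituting into the master inequality yields
$$\gamma_t(\eta_{j,t}-\Delta)\bigl(N-s+1-\Ebb[|\Bcal|\mid\btxi]\bigr)+\gamma_t\,\Ebb[U\mid\btxi]\;\leq\;\tau\quad\text{on }\{\Delta<\eta_{j,t}\},$$
where $U:=\sum_{t\in\Bcal}\gamma_t\langle f'(x_t),x_t-x^*\rangle\geq 0$.

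Dropping the nonnegative $\Ebb[U\mid\btxi]$ and using $\gamma_t\eta_{j,t}=2K_1K_2/N$, I would rearrange to obtain $\Ebb[|\Bcal|/N\mid\btxi]\geq 1-(s-1)/N-\tau/\bigl(N\gamma_t(\eta_{j,t}-\Delta)\bigr)$. To tame the noise $\Delta$, Lemma \ref{lemma:donsker} gives $\Ebb[\delta_j]=\Ocal(1/\sqrt{L})=\Ocal(1/\sqrt{N})$, and Markov yields $P(\Delta>\eta_{j,t}/2)=\Ocal(1/K_2)\leq 1/4$ once $K_2$ is large enough. On the complementary event $\{\Delta\leq\eta_{j,t}/2\}$ (probability $\geq 3/4$), the lower bound tightens to $\Ebb[|\Bcal|/N\mid\btxi]\geq 1-(s-1)/N-\tau/(K_1K_2)$. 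A conditional Markov step using $|\Bcal|/N\in[0,1]$ gives $P(|\Bcal|/N>B\mid\btxi)\geq\Ebb[|\Bcal|/N\mid\btxi]-B$; with $B:=1/2-(s-1)/N-\tau/(2K_1K_2)$ this is $\geq 1/2-\tau/(2K_1K_2)\geq 1/3$ for $K_1K_2$ sufficiently large, and integrating against the $\{\Delta\leq\eta_{j,t}/2\}$ event yields $P(|\Bcal|/N>B)\geq(1/3)(3/4)\geq 1/4$, which is exactly Case~1. The implication $\Ebb(|\Bcal|/N)>c$ then follows because $B>0$ once $K_1K_2$ is sufficiently large and $s/N$ is sufficiently small, so $\Ebb[|\Bcal|/N]\geq B\cdot P(|\Bcal|/N>B)>B/4>0$.

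Case~2 serves as a fallback for the scenario in which the master inequality is vacuous, namely $\Bcal\neq\emptyset$ a.s.\ and $U=0$ a.s.; the assertion then holds by inspection. The main obstacle, in my view, is the step that controls the \emph{uniform} fluctuation $\Delta=\max_j\sup_{x\in\Xcal}|\hat{G}_j(x)-g_j(x)|$ rather than a pointwise fluctuation at each random $x_t$. Because $x_t$ and $\hat{G}_j$ share the randomness in $\btxi$, a pointwise central limit theorem would not suffice; the Donsker-class conclusion in Lemma \ref{lemma:donsker} is precisely what permits the $\Ocal(1/\sqrt{N})$ control of $\Delta$ to apply uniformly over the random iterates, and it is what links the algorithmic threshold $\eta_{j,t}$ to the true constraint value $g_j(x_t)$ on the event $\{t\in\Ncal_j\}$.
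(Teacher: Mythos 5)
Your proposal is essentially correct and shares the paper's skeleton---apply Lemma \ref{lemma:prop9b} at $x=x^*$, use $g_j(x^*)\leq 0$ and $B_\psi(x^*,x_s)\leq D_\Xcal^2$ to obtain the bound by $\tau$, exploit $\hat{G}_j(x_t)>\eta_{j,t}$ on $\Ncal_j$ together with the fact that the $\Ncal_j$ partition $\{s,\ldots,N\}\setminus\Bcal$, and control the uniform deviation $\sup_{x\in\Xcal}|\hat{G}_j(x)-g_j(x)|$ via Lemma \ref{lemma:donsker}---but it diverges in one substantive step. The paper works with the \emph{realized} sum $\sum_{t,j}\gamma_t g_j(x_t)\one_{\{t\in\Ncal_j\}}$ and must therefore relate it to its conditional expectation given $\btxi$; this is the role of the event $A_2$, Lemma \ref{lemma:conc}, and the light-tail Assumption \ref{assump:lighttail}. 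You instead keep everything inside $\Ebb[\,\cdot\mid\btxi]$, extract a lower bound on $\Ebb[|\Bcal|/N\mid\btxi]$, and convert it to the probability statement of Case~1 via the reverse Markov inequality $P(X>B)\geq \Ebb[X]-B$ for $X\in[0,1]$. This is a legitimate and arguably cleaner route: it dispenses with Assumption \ref{assump:lighttail} entirely for this theorem, at the price of the weaker constants that Markov-type bounds give (your $\tfrac13\cdot\tfrac34$ accounting still recovers the $\tfrac14$). Your treatment of the Donsker part (Markov on $\Ebb[\Delta]=\Ocal(1/\sqrt{L})$ to get $P(\Delta>\eta_{j,t}/2)\leq 1/4$ for large $K_2$) matches the paper's bound on $P(A_1^c)$ in spirit, and your observation that the uniform (not pointwise) control of $\hat{G}_j-g_j$ is the crux is exactly right. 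One caution: your claim (ii) that the $f$-term is nonnegative because $\langle f'(x_t),x_t-x^*\rangle\geq f(x_t)-f(x^*)\geq 0$ is not justified when $x_t$ is infeasible, since $x^*$ minimizes $f$ only over the feasible set; if you lean on it to drop the $f$-term unconditionally, Case~2 becomes vacuous but the argument acquires a gap. Keep the paper's dichotomy instead---either the weighted sum $\sum_t\gamma_t\langle f'(x_t),x_t-x^*\rangle\one_{\{t\in\Bcal\}}$ is $\leq 0$ a.s.\ (Case~2) or its conditional expectation is nonnegative and may be discarded---which is exactly the fallback role you assign to Case~2, so the fix is only to drop the unjustified "$\geq 0$" claim, not to restructure the proof.
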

\vspace{-0.3cm}
\begin{proof}
We show that if the second condition does not hold then the first condition must hold. Note that if the second condition does not hold,
\vspace{-0.2cm}
\begin{align*}
&\sum_{t = s}^N  \gamma_t \langle f'(x_t), x_t - x^* \rangle ~\one_{\{t \in \Bcal\}}  \geq 0 \qquad
\Rightarrow \qquad \sum_{t = s}^N  \gamma_t ~\Ebb \bigg[ \langle f'(x_t), x_t - x^* \rangle ~\one_{\{t \in \Bcal\}} \biggm| \btxi \bigg] \geq 0,
\end{align*}
where the equality holds if $\Bcal = \emptyset$. Thus it follows from Lemma \ref{lemma:prop9b} with $x = x^*$ that
\vspace{-0.2cm}
\begin{align*}
~\sum_{t = s}^N \sum_{j=1}^m \gamma_t~\Ebb \bigg[\big(g_j(x_t) -  g_j(x^*) \big) ~\one_{\{t \in \Ncal_j\}} \biggm| \btxi \bigg] \leq ~ \Ebb \bigg[ B_{\psi}(x^*, x_{s}) \bigg| \btxi \bigg]  ~+~ \frac{M^2}{2}  \sum_{t = s}^N \gamma_t^2.
\end{align*}
Moreover, since $g_j(x^*) \leq 0$, $B_{\psi}(x^*, x_s) \leq D_{\Xcal}^2$, we have
\vspace{-0.1cm}
\begin{align}
\label{eq:lemma41}
\sum_{t = s}^N \sum_{j=1}^m \gamma_t \Ebb \left[ g_j(x_t) ~\one_{\{t \in \Ncal_j\}} \bigg| \btxi \right] \leq \tau 
\end{align}
where $\tau$ is defined in \eqref{eq:tau_def}. Note that, for $t \in \Ncal_j$, we have $\hat{G}_{j}(x_t) > \eta_{j,t}$. Thus we have,
\vspace{-0.1cm} 
\begin{align}
\label{eq:lemma42}
 \sum_{t = s}^N \sum_{j=1}^m \gamma_t \bigg( \hat{G}_{j}(x_t) \one_{\{t \in \Ncal_j\}} - \Ebb \left[ g_j(x_t) ~\one_{\{t \in \Ncal_j\}} \bigg| \btxi \right] \bigg) > \sum_{t = s}^N \sum_{j=1}^m \gamma_t \eta_{j,t} \one_{\{t \in \Ncal_j\}} - \tau.
\end{align}
Now consider the following two sets:
\vspace{-0.2cm}
\begin{align}
A_1 &:= \bigg\{\sum_{t = s}^N \sum_{j=1}^m \gamma_t (\hat{G}_{j}(x_t) - g_j(x_t)) \one_{\{t \in \Ncal_j\}} \leq \frac{1}{2} \sum_{t = s}^N \sum_{j=1}^m \gamma_t \eta_{j,t} \one_{\{t \in \Ncal_j\}} \bigg\}; \\
A_2 &:= \bigg\{\sum_{t = s}^N \sum_{j=1}^m \gamma_t \left( g_j(x_t)\one_{\{t \in \Ncal_j\}} -  \Ebb \left[ g_j(x_t) ~\one_{\{t \in \Ncal_j\}} \bigg| \btxi \right]\right) \leq \frac{K_1K_2 - \tau}{2} \bigg\}.
\end{align}

On the set $A_1 \cap A_2$, we have
\vspace{-0.2cm}
\begin{align}\label{eq:intersection}
 \sum_{t = s}^N \sum_{j=1}^m \gamma_t \big( \hat{G}_{j}(x_t) \one_{\{t \in \Ncal_j\}} - \Ebb \big[ g_j(x_t) ~\one_{\{t \in \Ncal_j\}} \big| \btxi \big] \big) \leq \frac{1}{2}\sum_{t = s}^N \sum_{j=1}^m \gamma_t \eta_{j,t} \one_{\{t \in \Ncal_j\}} + \frac{K_1K_2 - \tau}{2}.
\end{align}
By combining \eqref{eq:welldefined}, \eqref{eq:lemma42} and \eqref{eq:intersection}, we get
\vspace{-0.2cm}
\begin{align*}
\frac{2K_1K_2(N-s+1 - |\Bcal|)}{N} \leq \sum_{t = s}^N \sum_{j=1}^m \gamma_t \eta_{j,t} \one_{\{t \in \Ncal_j\}} < K_1K_2 + \tau.
\end{align*}
This implies on the set $A_1 \cap A_2$, we have $\frac{|\Bcal|}{N} > \frac{1}{2} - \frac{s-1}{N} - \frac{\tau}{2K_1K_2}$. Thus, we get,
\vspace{-0.2cm}
\begin{align*}
P\left(\frac{|\Bcal|}{N} > \frac{1}{2} - \frac{s-1}{N} - \frac{\tau}{2K_1K_2}\right) & \geq P(A_1 \cap A_2) \geq 1 - P(A_1^c) - P(A_2^c).
\end{align*}
Next, we derive upper bounds for $P(A_1^c)$ and $P(A_2^c)$. From straightforward calculations, it follows that
\vspace{-0.2cm}
\begin{align}
\label{eq:firstbound}
P(A_1^c) & \leq P\bigg(\sum_{t = s}^N \sum_{j=1}^m \left|\gamma_t (\hat{G}_{j}(x_t) - g_j(x_t)) \one_{\{t \in \Ncal_j\}}\right| > \sum_{t=s}^N \sum_{j=1}^m \frac{K_1K_2}{N} \bigg) \nonumber \\
& \leq \sum_{j=1}^m P\bigg( \sup_{x \in \Xcal} \left| \sqrt{L} (\hat{G}_{j}(x) - g_j(x))\right| >   \sqrt{\frac{LK_2^2}{2N}} \bigg) < \frac{1}{2}.
\end{align} 
where the last inequality follows from Lemma \ref{lemma:donsker} for sufficiently large $K_2$, since we have chosen $L = \Omega(N)$. Now using Lemma \ref{lemma:conc} and choosing $\lambda = (K_1K_2 - \tau)/\sqrt{4 \sum_{t=s}^N \gamma_t^2 \sigma^2}$, we have 
\vspace{-0.1cm}
\begin{align}
\label{eq:secondbound}
P(A_2^c) & \leq \exp \left( -\frac{N (K_1K_2 - \tau)^2}{12 \sigma^2 (N-s+1) K_1^2} \right) < \frac{1}{4},
\end{align} 
where the last inequality follows from choosing suitably large $K_1, K_2$. Thus, using \eqref{eq:firstbound} and \eqref{eq:secondbound} we get
\vspace{-0.4cm}
\begin{align*}
P\left(\frac{|\Bcal|}{N} > \frac{1}{2} - \frac{s-1}{N} - \frac{\tau}{2K_1K_2}\right) > \frac{1}{4}.
\end{align*}
\vspace{-0.2cm}
\end{proof}
\begin{theorem}
\label{thm:main1}
Let $\{\gamma_t\}$, $\eta_{j,t}$, $\{x_t\}$, $x^*$ be as in Theorem \ref{theorem:welldefined}. Then under Assumptions \ref{assump:norm} and \ref{assump:donsker}, we have
\vspace{-0.1cm}
\begin{align}
\label{eq:rate_eq}
\Ebb[f(\hat{\xb}) - f(x^*)] = \mathcal{O}\bigg(\frac{1}{\sqrt{N}}\bigg) \;\;\text{ and } \;\;\Ebb[ g_j(\hat{\xb})] = \mathcal{O}\bigg(\frac{1}{\sqrt{L}} + \frac{1}{\sqrt{N}}\bigg) \;\; \forall j. 
\end{align}
\end{theorem}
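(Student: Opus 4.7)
The plan is to bound $\Ebb[f(\hat{\xb}) - f(x^*)]$ and $\Ebb[g_j(\hat{\xb})]$ separately, treating the two assertions of Theorem \ref{theorem:welldefined} as a case split. First I would dispose of Case 2 entirely by convexity: since $\langle f'(x_t), x_t - x^*\rangle \geq f(x_t) - f(x^*) \geq 0$, the a.s.\ inequality $\sum_{t=s}^N \gamma_t \langle f'(x_t), x_t - x^*\rangle\,\one_{\{t\in\Bcal\}} \leq 0$ forces every summand to vanish; another application of convexity then gives $f(x_t) \leq f(x^*)$ for $t\in\Bcal$, so $f(\hat{\xb}) \leq f(x^*)$ a.s.\ as a convex combination. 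Henceforth I assume Case 1 holds, so that $|\Bcal|/N$ exceeds a positive constant with probability bounded away from $0$.

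For the optimality gap, convexity gives $f(\hat{\xb}) - f(x^*) \leq \bigl(\sum_{t\in\Bcal} \gamma_t (f(x_t)-f(x^*))\bigr)/\bigl(\sum_{t\in\Bcal}\gamma_t\bigr)$. Taking expectations and invoking Lemma \ref{lemma:fkg application} splits the bound into a product $\Ebb[\mathrm{Num}]\cdot\Ebb[1/\mathrm{Den}]$. For $\Ebb[\mathrm{Num}]$ I would replace $f(x_t)-f(x^*)$ by $\langle f'(x_t), x_t-x^*\rangle$ (convexity) and apply Lemma \ref{lemma:prop9b} with $x=x^*$. The only non-obvious residual is $-\sum_{t,j}\gamma_t \Ebb[(g_j(x_t)-g_j(x^*))\,\one_{\{t\in\Ncal_j\}}]$; I control it using $g_j(x^*)\leq 0$ and the key observation that $\{t\in\Ncal_j\}$ implies $\hat{G}_j(x_t) > \eta_{j,t}$, so that Lemma \ref{lemma:donsker} yields $g_j(x_t) \geq \eta_{j,t} - \sup_x|g_j(x)-\hat{G}_j(x)|$. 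Because $L=\Omega(N)$ and $\sum_t \gamma_t = O(\sqrt{N})$, the expectation of the residual is $O(\sqrt{N/L}) = O(1)$; combined with $D_\Xcal^2 = O(1)$ and $(M^2/2)\sum\gamma_t^2 = O(K_1^2) = O(1)$, this delivers $\Ebb[\mathrm{Num}] = O(1)$. For $\Ebb[1/\mathrm{Den}]$ the constant stepsize gives $\mathrm{Den} = |\Bcal|\sqrt{2}K_1/\sqrt{N}$, so it suffices to show $\Ebb[1/|\Bcal|] = O(1/N)$.

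The constraint side is more direct. By convexity of $g_j$, $g_j(\hat{\xb}) \leq \bigl(\sum_{t\in\Bcal}\gamma_t g_j(x_t)\bigr)/\bigl(\sum_{t\in\Bcal}\gamma_t\bigr)$; for each $t\in\Bcal$, the definition of $\Bcal$ and Lemma \ref{lemma:donsker} give $g_j(x_t) \leq \hat{G}_j(x_t) + \sup_x |g_j(x)-\hat{G}_j(x)| \leq \eta_{j,t} + \sup_x |g_j(x)-\hat{G}_j(x)|$. Since $\eta_{j,t}$ is constant in $t$, the ratio collapses pointwise to $\eta_{j,t} + \sup_x |g_j(x)-\hat{G}_j(x)|$, and taking expectations yields $\Ebb[g_j(\hat{\xb})] = O(1/\sqrt{N}) + O(1/\sqrt{L})$ by Lemma \ref{lemma:donsker}.

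The main obstacle will be promoting the qualitative lower bound $P(|\Bcal|/N > c_0) > 1/4$ from Theorem \ref{theorem:welldefined} into the quantitative $\Ebb[1/|\Bcal|] = O(1/N)$, since $1/|\Bcal|$ blows up on $\{\Bcal=\emptyset\}$. I expect to handle this by tightening the two probability estimates \eqref{eq:firstbound} and \eqref{eq:secondbound} in the proof of Theorem \ref{theorem:welldefined}: taking $K_1, K_2$ sufficiently large drives the Donsker tail in \eqref{eq:firstbound} and the sub-exponential tail in \eqref{eq:secondbound} to $o(1/N)$, and adopting a default convention (e.g.\ returning $x_s$) on the exceptional event makes $\hat{\xb}$ well-defined so that its contribution to $\Ebb[f(\hat{\xb})-f(x^*)]$, bounded by a constant using compactness of $\Xcal$ and continuity of $f$, is absorbed into the $O(1/\sqrt{N})$ rate.
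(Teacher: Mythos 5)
Your proposal tracks the paper's proof almost step for step: the same case split on the two conclusions of Theorem \ref{theorem:welldefined} (your Case-2 argument is in fact slightly cleaner, since you avoid the paper's unnecessary claim that $\hat{\xb}=x^*$ and let the generic constraint argument cover that case), the same convexity-plus-Donsker bound for $\Ebb[g_j(\hat{\xb})]$, the same FKG factorization of the optimality gap, and the same use of Lemma \ref{lemma:prop9b} with the $\Ncal_j$ residual controlled via $\hat{G}_j(x_t)>\eta_{j,t}$ and Lemma \ref{lemma:donsker}. The single point of divergence is the denominator term $\Ebb\big[1/\sum_{t\in\Bcal}\gamma_t\big]$. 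The paper dispatches it in one line in \eqref{eq:jensen} by ``Jensen's inequality for the concave function $1/x$,'' i.e.\ $\Ebb[1/X]\le 1/\Ebb[X]$; since $1/x$ is convex on $(0,\infty)$, Jensen actually runs the other way, so this is the weakest step of the published argument. You instead pose the honest sub-problem $\Ebb[1/|\Bcal|]=O(1/N)$ and correctly flag it as the main obstacle.

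Your proposed resolution of that obstacle has a gap, however. For fixed constants $K_1,K_2$ the bounds \eqref{eq:firstbound} and \eqref{eq:secondbound} are constants independent of $N$: \eqref{eq:secondbound} is essentially $\exp\big(-(K_1K_2-\tau)^2/(12\sigma^2K_1^2)\big)$ with $\tau\approx D_\Xcal^2+M^2K_1^2$, which can be made an arbitrarily small constant but never $o(1/N)$, and the Donsker tail in \eqref{eq:firstbound} converges to a fixed Gaussian-process tail probability. Driving these to $o(1/N)$ --- or even to the $O(1/\sqrt N)$ that actually suffices once you adopt the default convention and bound the bad-event contribution by a constant using compactness --- requires $K_2$ to grow like $\sqrt{\log N}$, which inflates $\eta_{j,t}=\sqrt{2}K_2/\sqrt{N}$ and hence degrades the constraint-violation bound to $O(\sqrt{\log N/N})$, breaking the stated rate. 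Note also that once you have $P(|\Bcal|\ge c_0 N)$ close to one, FKG becomes superfluous on the good event: there $\sum_{t\in\Bcal}\gamma_t\ge \sqrt{2}K_1c_0\sqrt{N}$ holds deterministically and, since the numerator is nonnegative, you may bound $\Ebb[\mathrm{Num}/\mathrm{Den}\,;\,\text{good}]\le \Ebb[\mathrm{Num}]/(\sqrt{2}K_1c_0\sqrt{N})$ directly. So the genuinely missing ingredient --- in your plan, and arguably in the paper --- is a high-probability (not merely probability-$1/4$) version of Theorem \ref{theorem:welldefined} that does not require changing the scale of $\eta_{j,t}$.
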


\begin{proof}
First observe that the second condition of Theorem \ref{theorem:welldefined} implies that our Algorithm has already converged. Specifically, if $\Bcal \neq \emptyset$ our algorithm is well-defined and we have,
\vspace{-0.2cm}
\begin{align}
\label{eq:rightbound1}
0 &\geq \sum_{t = s}^N  \gamma_t \langle f'(x_t), x_t - x^* \rangle ~\one_{\{t \in \Bcal\}} \geq  \sum_{t = s}^N \gamma_t (f(x_t) - f(x^*)) ~\one_{\{t \in \Bcal\}} \nonumber \\
& \geq \bigg( \sum_{t = s}^N  \gamma_t \one_{\{t \in \Bcal\}} \bigg) (f(\hat{\xb}) - f(x^*))
\end{align}
where we have used the successively used the convexity of $f$. Since $\Bcal \neq \emptyset$, we get $f(\hat{\xb}) - f(x^*) \leq 0$ and hence the algorithm has already converged, i.e.\ $\hat{\xb} = x^*$. In this case, we have $\Ebb[f(\hat{\xb}) - f(x^*)] = 0$ and $\Ebb[ g_j(\hat{\xb})] = \Ebb[ g_j(x^*)] \leq 0$. Thus, either our algorithm has already converged or the first condition of Theorem \ref{theorem:welldefined} holds. From the first condition of Theorem \ref{theorem:welldefined} we have $\Ebb(|\Bcal| / N) > c$ for sufficiently large $N$ and for some $c > 0$. Now using convexity of $g_j$ and the fact that $\hat{G}_{j}(x_t) \leq \eta_{j,t}$ for $t \in \Bcal$, we have
\vspace{-0.2cm}
\begin{align*}
\Ebb( g_j(\hat{\xb}) ) & \leq \Ebb \left[ \frac{\sum_{t=s}^N \gamma_t g_j(x_t) \one_{\{t \in \Bcal\}}}{ \sum_{t = s}^N \gamma_t \one_{\{t \in \Bcal\}}}\right] \\
&= \Ebb \left[ \frac{\sum_{t=s}^N \gamma_t (g_j(x_t) - \hat{G}_{j}(x_t))\one_{\{t \in \Bcal\}}}{ \sum_{t = s}^N \gamma_t \one_{\{t \in \Bcal\}}}\right] + \Ebb \left[ \frac{\sum_{t=s}^N \gamma_t \hat{G}_{j}(x_t)\one_{\{t \in \Bcal\}}}{ \sum_{t = s}^N \gamma_t \one_{\{t \in \Bcal\}}}\right] \\
& \leq \Ebb \left[ \sup_{x} (g_j(x) - \hat{G}_{j}(x))\right] +  \frac{\sqrt{2}K_2}{\sqrt{N}} = \mathcal{O}\bigg(\frac{1}{\sqrt{L}} + \frac{1}{\sqrt{N}}\bigg),
\end{align*}
where the we have used Lemma \ref{lemma:donsker} to get the last equality. Similarly, using convexity of $f$
\begin{align}
\label{eq:first_bound}
\Ebb \left[ f(\hat{\xb}) - f(x^*)\right] &\leq \Ebb \left[ \frac{\sum_{t=s}^N \gamma_t (f(x_t) - f(x^*))\one_{\{t \in \Bcal\}}}{ \sum_{t = s}^N \gamma_t \one_{\{t \in \Bcal\}}}\right] \nonumber \\
& \leq \Ebb \left[ \sum_{t=s}^N \gamma_t (f(x_t) - f(x^*))\one_{\{t \in \Bcal\}}\right] \Ebb\left[\frac{1}{ \sum_{t = s}^N \gamma_t \one_{\{t \in \Bcal\}}}\right] 
\end{align}
where the last inequality follows from Lemma \ref{lemma:fkg application}. Note that applying Jensen's inequality for the concave function $1/x$ we have,
\vspace{-0.2cm}
\begin{align}
\label{eq:jensen}
\Ebb\left[\frac{1}{ \sum_{t = s}^N \gamma_t \one_{\{t \in \Bcal\}}}\right] \leq \frac{1}{\Ebb \left[ \sum_{t = s}^N \gamma_t \one_{\{t \in \Bcal\}} \right] } \leq \frac{\sqrt{N}}{\sqrt{2}K_1 N\Ebb\left[ \frac{| \Bcal|}{N}\right]} \leq \frac{1}{(\sqrt{2}cK_1)\sqrt{N}}.
\end{align}
where we have used the definition of $\gamma_t$ and the fact that $\Ebb(|\Bcal| / N) > c$. Moreover, using the fact that both $x_t$ and $\one_{\{t \in \Bcal\}}$ do not depend on $\xi_{0,t}$ we have, 
\vspace{-0.2cm}
\begin{align}
\label{eq:first_term}
\Ebb &\bigg[ \sum_{t=s}^N  \gamma_t (f(x_t) - f(x^*))\one_{\{t \in \Bcal\}}\bigg] = 
\sum_{t=s}^N \gamma_t \Ebb \left[  (F(x_t, \xi_{0,t}) - F(x^*, \xi_{0,t}))\one_{\{t \in \Bcal\}}\right]
\nonumber \\
& \leq \sum_{t=s}^N \gamma_t \Ebb \left[\langle F'(x_t, \xi_{0,t}), x_t - x^* \rangle\one_{\{t \in \Bcal\}}\right]
\nonumber \\
& \leq \Ebb(B_{\psi}(x^*, x_{s})) - \sum_{t = s}^N \sum_{j=1}^m  \Ebb \left[\gamma_t \big(G_j(x_t, \xi_{j,t}) -  G_j(x^*, \xi_{j,t})\big) \one_{\{t \in \Ncal_j\}}\right] + \frac{M^2}{2}  \sum_{t = s}^N \gamma_t^2 \nonumber \\
& \leq \tau - \sum_{t = s}^N \sum_{j=1}^m \gamma_t~\Ebb\big[ g_j(x_t) ~\one_{\{t \in \Ncal_j\}} \big] 
\end{align}
where the first inequality follows from the convexity of $F(\cdot, \xi_{j,t})$, the second inequality follows from Lemma \ref{lemma:prop9} with $x = x^*$ and the definition of $M$ given in Lemma \ref{lemma:prop9b} and the third inequality follows the definition of $\tau$ from \eqref{eq:tau_def} and the fact that $g_j(x^*) \leq 0$. By plugging in \eqref{eq:jensen} and \eqref{eq:first_term} into \eqref{eq:first_bound} we get,
\vspace{-0.2cm}
\begin{align*}
\Ebb \left[ f(\hat{\xb}) - f(x^*)\right] &\leq \frac{1}{(\sqrt{2}cK_1)\sqrt{N}} \bigg(\tau - \sum_{t = s}^N \sum_{j=1}^m \gamma_t~\Ebb\big[ g_j(x_t) ~\one_{\{t \in \Ncal_j\}} \big]  \bigg)
\end{align*}
Finally,
\vspace{-0.2cm}
\begin{align*}
\Ebb\left[ g_j(x_t) ~\one_{\{t \in \Ncal_j\}} \right]  &= \Ebb\left[ (g_j(x_t) - \hat{G}_{j}(x_t))~\one_{\{t \in \Ncal_j\}} \right] + \Ebb\left[ \hat{G}_{j}(x_t)~\one_{\{t \in \Ncal_j\}} \right] \\
& \geq \Ebb\left[ (g_j(x_t) - \hat{G}_{j}(x_t))~\one_{\{t \in \Ncal_j\}} \right] + \eta_{j,t} \\
& \geq -\Ebb\left[ \sup_{x} \left|\hat{G}_{j}(x) - g_j(x)\right| \right] + \frac{\sqrt{2}K_2}{\sqrt{N}} \geq -\frac{C_2}{\sqrt{L}}~~\text{for some $C_2 > 0$,}
\end{align*}
where the last inequality follows from the second part of Lemma \ref{lemma:donsker} and the fact that $K_2 > 0$.
Thus, we have
\vspace{-0.4cm}
\begin{align*}
\Ebb \left[ f(\hat{\xb}) - f(x^*)\right] &\leq \frac{1}{(\sqrt{2}cK_1)\sqrt{N}}  \bigg(\tau + \sum_{t = s}^N \sum_{j=1}^m  \frac{C_2\gamma_t}{\sqrt{L}} \bigg) =  \mathcal{O}\bigg( \frac{1}{\sqrt{N}} \bigg),
\end{align*}
where the last equality follows from the fact that $\gamma_t = \mathcal{O}(1/\sqrt{N})$.
\end{proof}

\vspace{-0.5cm}
\section{Experiments}
\label{sec:experiments}
\vspace{-0.3cm}
In this section, we describe simulated experiments
to showcase the efficacy of our algorithm. Throughout this section, we compare our algorithm to the state-of-the-art algorithm in online convex optimization with stochastic constraints \cite{yu2017online}. We focus our experiments on a real-world problem which motivated our work. 
\vspace{-0.2cm}
\subsection{Personalized Parameter Selection in Large-Scale Social Network}
\vspace{-0.3cm}
Most social networks do extensive experimentation to identify which parameter gives rise to the biggest online metric gains. However, in many cases choosing a single global parameter is not very prudent. That being said, experimenting to identify member level parameter is an extremely challenging problem. A good middle pathway lies in identifying cohorts of members and estimating the optimal parameter for each cohort. \cite{tu2019prophet} tries to solve this problem by framing this problem as \eqref{eq:main_problem}. 

Let us focus on the minimum gap parameter in the ranking ads problem. Let us assume that the parameter can take $K$ possible  values and there are $M + 1$ metrics we are interested in. One primary metric (revenue) and $M$ guardrail metrics (ads click-through-rate, organic click-through-rate, etc). Specifically, we can estimate the causal effect when treatment $k \in K$ is applied to a cohort for each of these $M+1$ metrics. This effect is a random variable which is usually distributed as a Gaussian with some mean and variance. Our aim is to identify the optimal allocation $x^*$ which maximizes the expected revenue. Formally, let $x_{ik}$ denote the probability of of assigning the $k$-th treatment to the $i$-th cohort and let $U_{ik}^j$ denote the metric lift in the $j$-th metric when $k$-th treatment is assignment to $i$-th cohort. 
\vspace{-0.1cm}
Thus we want to solve,
\begin{equation}
\label{eq:example_main}
\begin{aligned}
& \underset{x}{\text{Maximize}} &  \sum_{ik} x_{ik}U_{ik}^0\\
& \text{subject to}
&  \sum_{ik}x_{ik}U_{ik}^j \leq c_j &\qquad \text{ for } j = 1, \ldots, M. \\
&& 0 \leq x_{ik} \leq 1 &\qquad \text{ for all } i,k \qquad \text{ and } \qquad \sum_{k} x_{ik} = 1 \;\;\; \text{ for all } i.
\end{aligned}
\end{equation}
\vspace{-0.4cm}

Note that each $U_{ik}^j$ is a random variable distributed usually as a Gaussian distribution with some mean and variance. Hence this problem can be translated to the following:
\vspace{-0.1cm}
\begin{equation}
\label{eq:example_problem}
\begin{aligned}
& \underset{x}{\text{Maximize}} & & f(x) := \Ebb_{ \xi_0} \left(x^T\xi_0\right)\\
\vspace{-0.5cm}
& \text{subject to}
& & g_j(x) :=  \Ebb_{ \xi_j} \left(x^T \xi_j - c_j\right) \leq 0\qquad \text{ for } j = 1, \ldots, M, \qquad \text{ and } \qquad x \in \Xcal
\end{aligned}
\end{equation}
where $\Xcal$ are the set of non-stochastic constraints. For simplicity in our simulation setting we choose, $\Xcal = [0,1]^d$, and $c_j = 0$ for all $j$. Further, we assume that each $\xi_j \sim N_d(\mu, \Sigma)$ for $j = 0, \ldots, M$. Note that, this also satisfies Assumptions \ref{assump:norm} and \ref{assump:donsker}. All parameter settings are pushed to the supplementary for brevity. 

We run the algorithm using different configurations of the mean and variance. 
Although, we fix $\btxi$ in our algorithm to get $\hat{G}_{j}$ in \eqref{eq:Gfunc1}, it can very easily be changed to an online version, by choosing
$\hat{G}_{j}(x_t) := \frac{1}{t} \sum_{\ell = 1}^t G_j(x_t, \tilde{\xi}_{j,\ell})$.
We call this the MCSA-online algorithm and show the results to ensure a fair comparison with Yu et. al. (2017) \cite{yu2017online}. The results are shown in Figure \ref{fig:main}. 
\vspace{-0.2cm}
\begin{figure*}[!th] 
\centering
\includegraphics[width=\linewidth]{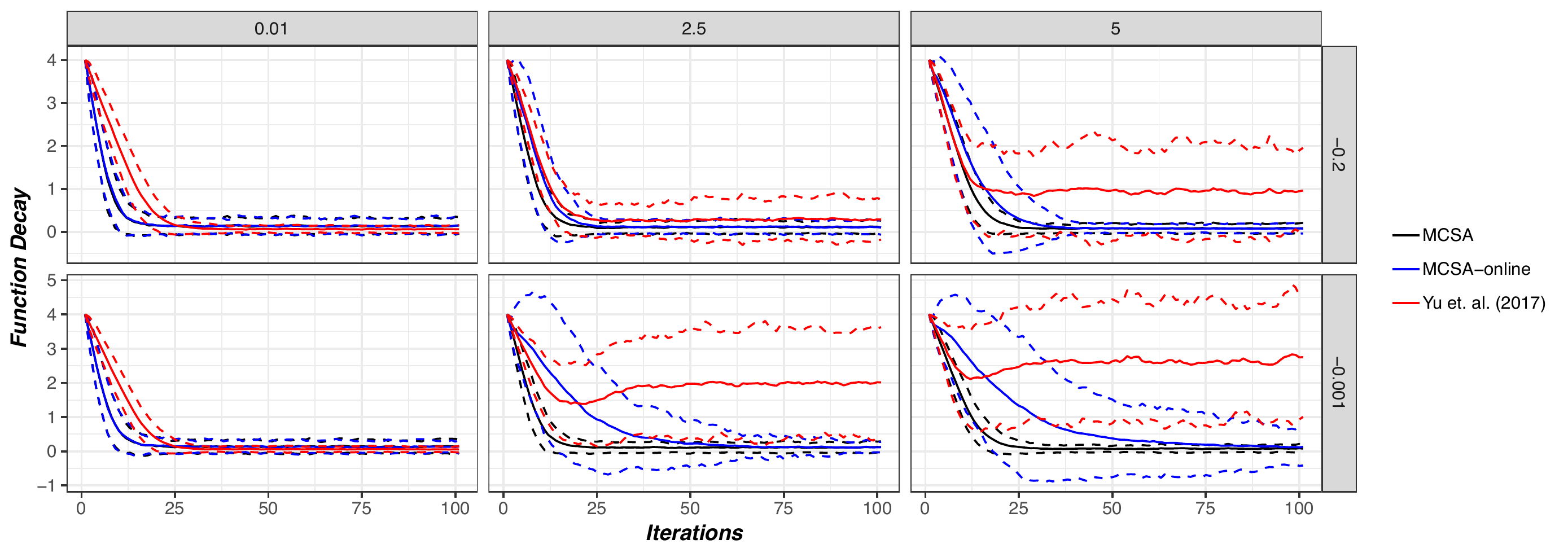}
\caption{\label{fig:main} This shows the function decay vs iterations for six different simulation settings corresponding to $\mu \in \{-0.2, -0.001\}$ and $\sigma^2 \in \{0.01, 2.5, 5\}$. The top and the bottom rows correspond to an easy and a more difficult feasibility domain respectively, while the columns depict an increase in variance we move towards the right. The dotted lines denote the 95\% confidence interval using 100 repeats of each configuration.}
\end{figure*}

From Figure \ref{fig:main}, we observe that in a high signal-to-noise ratio setting all algorithms work well. For the settings with low signal-to-noise ratios, MCSA and MCSA-online outperform the state-of-the-art. For a harder problem setting, a more clear evidence can be seen that our algorithm performs better than \cite{yu2017online}. We have also tried a similar exercise with different dimensions $d$ and different number of constraints $M$ and in each case, we observed a similar result.

\vspace{-0.3cm}
\section{Discussion}
\label{sec:discussion}
\vspace{-0.3cm}
In this paper, we have described a solution to a stochastic optimization problem specifically containing multiple expectation constraints. We introduce the MCSA algorithm, prove its optimal convergence by a careful consideration of the dependent structure. This work also plugs in several gaps in the proof of \cite{lan2016algorithms}. We compare an online version of our algorithm to the state-of-the-art and empirically show some instances where it performs better. As a future step, we are interested in extending the algorithm to more general problems, especially the cases where the expectation constraints are replaced by probability. Being able to solve such general problems will have a high impact in many research and scientific domains. 

\section*{Acknowledgement}
We thank Abhishek Chakrabortty for many helpful discussions on empirical processes and Donsker's theorem.

\bibliographystyle{abbrv}
\bibliography{stocProg} 
\vfill
\pagebreak
\section{Supplementary Material}
\label{sec:appendix}

\subsection{Proof of Supporting Lemmas}

\subsubsection{Proof of Lemma \ref{lemma:breg}}
We refer to \cite{nemirovski2009robust} for a proof. 

\subsection{Proof of Lemma \ref{lemma:prop9}}
The proof of this result is almost identical to the proof of Proposition 9 in \cite{lan2016algorithms}. We present this here for completeness. For any $s \leq t \leq N$, from Lemma \ref{lemma:breg} we have,
\begin{align}
\label{eq:lemma21}
B_{\psi}(x, x_{t+1}) \leq B_{\psi}(x, x_t) + \gamma_t \langle h_t, x - x_t \rangle + \frac{\gamma_t^2}{2} \norm{h_t}_{\psi^*}^2.
\end{align} 
Note that, if $t \in \Bcal$, $\langle h_t, x_t - x \rangle = \langle F'(x_t, \xi_{0,t}), x_t - x \rangle$ and if $t \in \Ncal_j$, then due to convexity of $G_j$, 
\begin{align*}
\langle h_t, x_t - x \rangle &= \langle G_j'(x_t, \xi_{j,t}), x_t - x \rangle \geq G_j(x_t, \xi_{j,t}) -  G_j(x, \xi_{j,t}).
\end{align*}
Moreover, we have
\begin{align}
\label{eq:fourth term}
\norm{h_t}_{\psi^*}^2  = \norm{F'(x_t, \xi_{0,t})}_{\psi^*}^2  \one_{\{t \in \Bcal\}} + \sum_{j=1}^m \norm{G'(x_t, \xi_{j,t})}_{\psi^*}^2 \one_{\{t \in \Ncal_j\}}  
\end{align}
Thus we get,
\begin{align}
\label{eq:fundamental inequality}
B_{\psi}(x, x_{t+1}) &\leq B_{\psi}(x, x_t) - \gamma_t \langle F'(x_t, \xi_{0,t}), x_t - x \rangle ~\one_{\{t \in \Bcal\}} \\
&\qquad - \sum_{j=1}^m\gamma_t \big(G_j(x_t, \xi_{j,t}) -  G_j(x, \xi_{j,t})\big)~\one_{\{t \in \Ncal_j\}} \\
& \qquad + \frac{\gamma_t^2}{2}\left(\norm{F'(x_t, \xi_{0,t})}_{\psi^*}^2  \one_{\{t \in \Bcal\}} + \sum_{j=1}^m \norm{G'(x_t, \xi_{j,t})}_{\psi^*}^2 \one_{\{t \in \Ncal_j\}} \right) \qquad \text{a.s.}
\end{align}
Adding up from $t = s$ to $N$ we get,
\begin{align*}
B_{\psi}(x, x_{N+1}) &\leq B_{\psi}(x, x_{s}) - \sum_{t = s}^N \gamma_t \langle F'(x_t, \xi_{0,t}), x_t - x \rangle ~\one_{\{t \in \Bcal\}}\\
& - \sum_{t = s}^N\sum_{j=1}^m\gamma_t \big(G_j(x_t, \xi_{j,t}) -  G_j(x, \xi_{j,t})\big)~\one_{\{t \in \Ncal_j\}}\\
&+ \sum_{t=s}^N \frac{\gamma_t^2}{2}\left(\norm{F'(x_t, \xi_{0,t})}_{\psi^*}^2  \one_{\{t \in \Bcal\}} + \sum_{j=1}^m \norm{G'(x_t, \xi_{j,t})}_{\psi^*}^2 \one_{\{t \in \Ncal_j\}} \right)  \qquad \text{a.s.}
\end{align*}
Rearranging the terms and using $ B_{\psi}(x, x_{N+1}) \geq 0$ we arrive at the required inequality, hence proving the result.
\qed

\subsubsection{Proof of Lemma \ref{lemma:prop9b}}
By taking a conditional expectation given $x_t$ and $\btxi$ on both sides of \eqref{eq:fundamental inequality} and then adding up from $t = s$ to $N$ we get,
\begin{align*}
\sum_{t = s}^N \Ebb &\left[\gamma_t \langle F'(x_t, \xi_{0,t}), x_t - x \rangle~\one_{\{t \in \Bcal\}} \mid x_t, \btxi \right] +  \nonumber\\
& \sum_{j=1}^m \sum_{t = s}^N \Ebb\left[ \gamma_t \big(G_j(x_t, \xi_{j,t}) -  G_j(x, \xi_{j,t})\big)~\one_{\{t \in \Ncal_j\}} \mid x_t, \btxi \right] \nonumber \\
& \leq \Ebb [B_{\psi}(x, x_{s}) \bigm| x_t, \btxi ] + \sum_{t = s}^N \frac{\gamma_t^2}{2}  \Ebb \left[ \norm{F'(x_t, \xi_{0,t})}_{\psi^*}^2 ~ \one_{\{t \in \Bcal\}}  \bigm| x_t,  \btxi \right] + \nonumber \\
 & \qquad \sum_{t = s}^N \sum_{j=1}^m  \frac{\gamma_t^2}{2}  \Ebb \left[ \norm{G'(x_t, \xi_{j,t})}_{\psi^*}^2~ \one_{\{t \in \Ncal_j\}} \bigm| x_t , \btxi\right] 
\end{align*}
Note that given $x_t$, the random variables $\one_{\{t \in \Bcal\}}$ and $\one_{\{t \in \Ncal_j\}}$ (depending on 
$\boldsymbol{\tilde{\xi}} :=  \{ \{ \tilde{\xi}_{j,\ell}\}_{\ell = 1}^L\}_{j=1}^m$) are independent of $\bxi_t$. Hence we get,
\begin{align}
\label{eq:second term}
\Ebb \bigg[ \langle F'(x_t, \xi_{0,t}), x_t - x \rangle ~\one_{\{t \in \Bcal\}} \biggm| x_t, \btxi \bigg] = &~ \Ebb \big[ \langle F'(x_t, \xi_{0,t}), x_t - x \rangle \bigm| x_t \big] ~ \one_{\{t \in \Bcal\}} \nonumber \\
=&~ \langle f'(x_t), x_t - x \rangle \one_{\{t \in \Bcal\}},
\end{align}
and
\begin{align}
\label{eq:third term}
\Ebb \bigg[ \big(G_j(x_t, \xi_{j,t}) -  G_j(x, \xi_{j,t}) \big)~\one_{\{t \in \Ncal_j\}} \biggm| x_t, \btxi \bigg]= &~ \Ebb \big[G_j(x_t, \xi_{j,t}) -  G_j(x, \xi_{j,t}) \bigm| x_t \big] \one_{\{t \in \Ncal_j\}} \nonumber \\
=&~ (g(x_t) - g(x))  ~ \one_{\{t \in \Ncal_j\}}.
\end{align}
Similarly, we have
\begin{align}
\label{eq:fourth term}
&\sum_{t = s}^N \frac{\gamma_t^2}{2}  \Ebb \left[ \norm{F'(x_t, \xi_{0,t})}_{\psi^*}^2 ~ \one_{\{t \in \Bcal\}} \bigm| x_t,   \btxi \right] +  \sum_{t = s}^N \sum_{j=1}^m  \frac{\gamma_t^2}{2}  \Ebb \left[ \norm{G'(x_t, \xi_{j,t})}_{\psi^*}^2~ \one_{\{t \in \Ncal_j\}} \bigm| x_t , \btxi\right]  \nonumber \\
& = \sum_{t = s}^N \frac{\gamma_t^2}{2}  \Ebb \big[ \norm{F'(x_t, \xi_{0,t})}_{\psi^*}^2 \bigm| x_t \big] ~ \one_{\{t \in \Bcal\}} + \sum_{t = s}^N \sum_{j=1}^m  \frac{\gamma_t^2}{2}  \Ebb \big[ \norm{G'(x_t, \xi_{j,t})}_{\psi^*}^2 \bigm| x_t \big] ~ \one_{\{t \in \Ncal_j\}}  \nonumber \\
& \leq \sum_{t = s}^N \frac{\gamma_t^2}{2} M_F^2 ~ \one_{\{t \in \Bcal\}} + \sum_{t = s}^N \sum_{j=0}^m \frac{\gamma_t^2}{2} M_{G_j}^2  \one_{\{t \in \Ncal_j\}}  \leq M^2 \sum_{t = s}^N \frac{\gamma_t^2}{2}
\end{align}
where the first inequality follows from Assumption \ref{assump:norm} and the second inequality follows from the definition of $M$. By combining \eqref{eq:second term}, \eqref{eq:third term} and \eqref{eq:fourth term} and by taking a conditional expectation over $x_t$ given $\btxi$, we obtain the desired inequality, hence proving the result.

\subsubsection{Proof of Lemma \ref{lemma:conc}}
The proof follows from Lemma 7 of \cite{lan2016algorithms}. 
Note that $\zeta_t$ conditional on $\btxi$ is a deterministic function of $\xi_1, \ldots, \xi_{t-1}$, where $\xi_t = (\xi_{1,t}, \ldots, \xi_{m,t})$ is the sample drawn at $t$-th iteration. Now, 
$\Ebb(\zeta_t | \btxi) = 0$ and from Assumption \ref{assump:lighttail}, $\Ebb(\exp(\zeta_t^2 / \gamma_t^2 \sigma^2) | \btxi) \leq \exp(1)$. 
Thus, now from the result of Lemma 7 in \cite{nemirovski2009robust} we have for all $\lambda > 0$,
\begin{align*}
P\left(\sum_{t=s}^{N} \zeta_t > \lambda \sqrt{\sum_{t=s}^N \gamma_t^2 \sigma^2} \;\;\Bigg| \;\;\btxi \right) \leq \exp\bigg(-\frac{\lambda^2}{3}\bigg). 
\end{align*}
\qed

\subsubsection{Proof of Lemma \ref{lemma:donsker}}
Fix $j \in \{1,\ldots,m\}$. We begin by defining the \emph{bracketing number} $N_{[]}(\epsilon, \Fcal_j, L_2(P))$ of the function class $\Fcal_j = \{G_j(x, \cdot) : x \in \Xcal \}$ with respect to the $L_2(P)$ norm defined as $||G_j(x, \cdot)||_{L_2(P)}^2 = \Ebb[G_j(x, \xi)^2]$. The set of all functions in $\Fcal$ satisfying $||G_j(x, \xi) - G_j(y, \xi)||_{L_2(P)} < \epsilon$ is an \emph{$\epsilon$-bracket} with boundaries $G_j(x, \xi)$ and $G_j(y, \xi)$. Then the bracketing number $N_{[]}(\epsilon, \Fcal_j, L_2(P))$ is defined as the minimum number of $\epsilon$-bracket required to cover $\Fcal_j$.

Next, we formally define the Donsker property for $\Fcal_j$ and its relation with the bracketing number. The function class $\Fcal_j$ is said to be $P$-Donsker if $\hat{G}(x)$ converges in distribution to a Gaussian random element (called Brownian bridge) in the space $\ell^{\infty}(\Xcal)$ of all bounded functions $h: \Xcal \rightarrow \mathbb{R}$, equipped with the sup norm $||h||_{\sup} = \sup_{x} |h(x)|$. A sufficient condition for a function class $\Fcal$ to be Donsker $P$-Donsker is given by 
\begin{align}
\label{eq:suff cond}
N_{[]}(\epsilon, \Fcal, L_r(P) = \mathcal{O}\bigg( \frac{1}{\epsilon^d} \bigg),~~\text{for some $d > 0$}.
\end{align}
We note that \eqref{eq:suff cond} holds for $\Fcal = \Fcal_j$ with $d = dimension(\Xcal)$. This follows from Assumption \ref{assump:donsker}, Theorem 2.7.11 of \cite{VaartWellner96} and the fact that $\Xcal$ is a compact set. Therefore, the first part of Lemma \ref{lemma:donsker} follows from the continuous mapping theorem, since $h(x, \xi) = \sup_{x \in \Xcal}|G_j(x, \xi) - g(x)|$ is a continuous function for each $\xi$.

The second part of Lemma \ref{lemma:donsker} follows from \eqref{eq:suff cond} and Theorem 2.14.2 of \cite{VaartWellner96}. \qed

\subsubsection{Proof of Lemma \ref{lemma:fkg}}
We refer to \cite{FKG71} for a proof.

\subsubsection{Proof of Lemma \ref{lemma:fkg application}}
Let $T = \{s, \ldots, N\}$ and let $\Omega(T)$ be the set of all subsets of $T$. We define a partial ordering on $\Omega(T)$ of as follows: $B_1 \preceq B_2$ if and only if $B_1 = \{t_1,\ldots,t_{k_1}\}$ and $B_2 = \{t_1,\ldots,t_{k_2}\}$ where $s \leq t_1 < t_2 < \cdots < t_{k_1} < \cdots < t_{k_2} \leq N$. We define a probability distribution on $\Omega(T)$ as $\mu(A) : = P(\Bcal = A)$. Then it is easy to verify that $(\Omega(T), \vee, \wedge)$ is a finite distributive lattice for
\[
A\vee B := \argmax \{\mu(A), \mu(B)\} ~~\text{and}~~ A\wedge B := \argmin \{\mu(A), \mu(B)\}.
\]
Therefore, we have $P(A\vee B)~P(A\wedge B) = P(A) P(B)$. Finally, note that both $h_1(B) := \Ebb [ \sum_{t \in \Bcal} \gamma_t (f(x_t) - f(x^*)) \mid \Bcal = B]$ and $h_2(B) := -\Ebb [ 1 /(\sum_{t \in \Bcal} \gamma_t) \mid \Bcal = B]$ are increasing functions in $\Omega(T)$ as $f(x) - f(x^*) \geq 0$ for all $x \in \Xcal$ and $\gamma_t > 0$ for all $t$. Thus Lemma \ref{lemma:fkg} follows directly from Lemma \ref{lemma:fkg}. \qed

\subsection{Experimental Setup}

To initialize, we choose $x_1 = 0.5 \times \one_d$, a $d$-dimensional vector where each co-ordinate is 0.5 
and we fix $d = 100$. Moreover we choose $\{\eta_j\}_t = M^2/\sqrt{N}$ and $\{\gamma\}_t  = D_\Xcal/(M\sqrt{N})$. We choose $\psi(x) = x^Tx$ and $M = 10$. We first convert this into a minimization problem and we pick, $\mu_0 = -0.8\times\one_d$ and $\Sigma_0 = \Ib_{d\times d}$.

\subsubsection{Feasible Configuration:}
We begin with a feasible configuration. We choose, $\mu_j = \mu \times \one_d$,  $\Sigma_j = \sigma^2 \Ib_{d\times d}$ for $j = 1, \ldots, m$. We run both variants of the MCSA algorithm along with the state-of-the-art \cite{yu2017online} for difference choices of $(\mu,\sigma^2)$.  Specifically, we choose $\mu = \{-0.2, -0.001\}$ and $\sigma^2 = \{0.01, 2.5, 5\}$. Note that $\mu = -0.2$ it is a relatively easier problem since the feasible region is easier to obtain as compared to $\mu = -0.001$.

In all of these settings, the optimal solution is $f(x^*) = 80$. Each of these six setting are repeated 100 times and we plot the mean function decay $f(x_t) -  f(x^*)$ across the iterations. We also add a 95\% confidence interval from the repeated experiments. Throughout we are considering the corresponding minimization problem corresponding to \eqref{eq:example_problem}.

\subsubsection{Infeasible Configuration:}
Note that as long as $\mu < 0$, the problem in \eqref{eq:example_problem} is a feasible problem. To test infeasibility, we choose $\mu = 0.2 \times \one_d$ and $\sigma^2 = 1$. Furthermore we increase the number of iterations $N = 10000$. In this cases, both the algorithms failed to converge as expected. The plot is for MCSA shown in Figure \ref{fig:infeasible}.

\begin{figure} [ht!]
\centering
\includegraphics[scale=0.35]{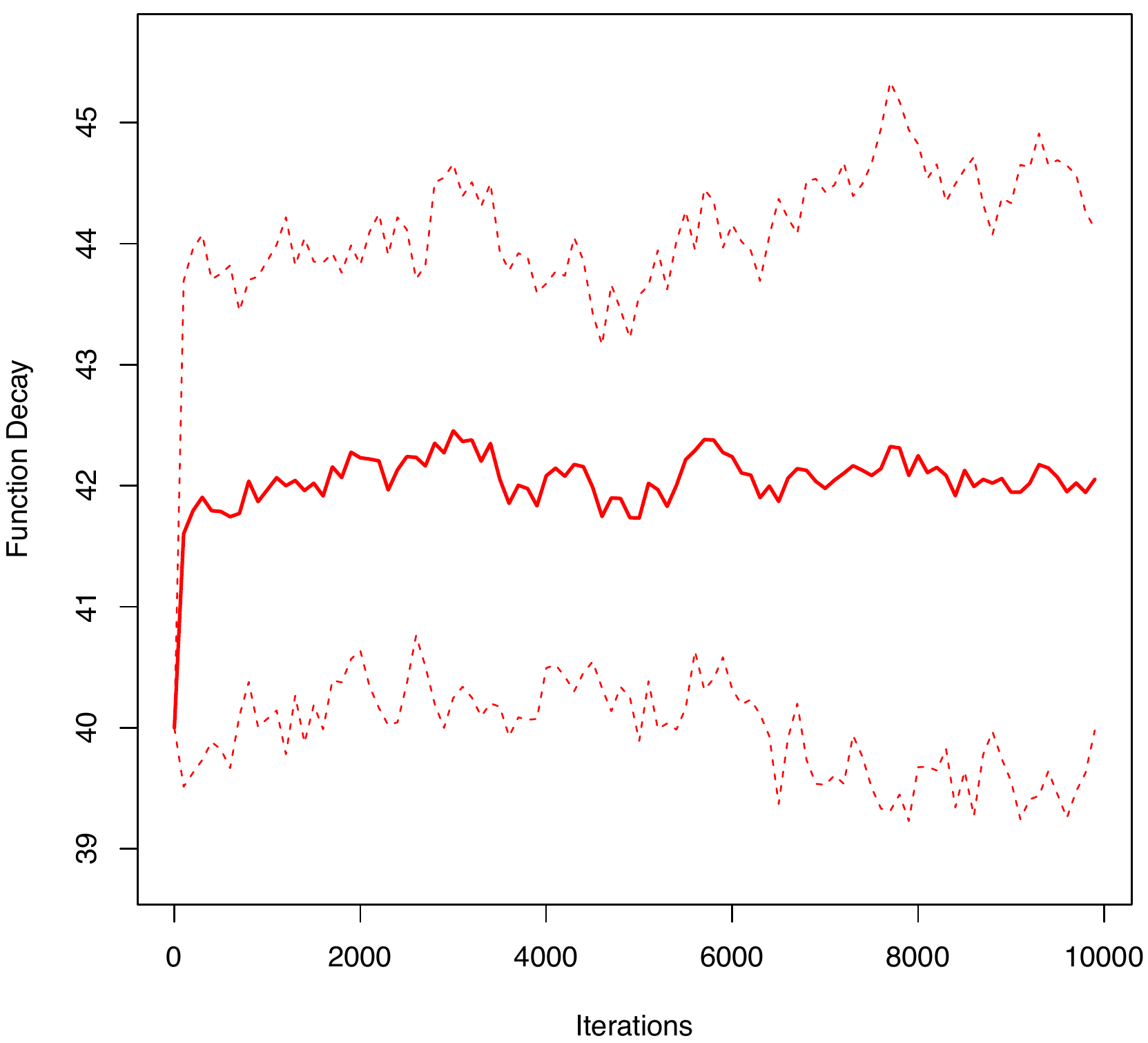}
\caption{Function decay for an infeasible problem.}
\label{fig:infeasible}
\vspace{-0.3cm}
\end{figure}

\end{document}